\newtheorem{theorem}{Theorem}[section]
\newtheorem{proposition}[theorem]{Proposition}
\newtheorem{lemma}[theorem]{Lemma}
\newtheorem{corollary}[theorem]{Corollary}
\theoremstyle{definition}
\newtheorem{definition}[theorem]{Definition}
\theoremstyle{remark}
\newtheorem{remark}[theorem]{Remark}
\newcommand{\Z}{\ensuremath{\mathbb Z}}
\newcommand{\Q}{\ensuremath{\mathbb Q}}
\newcommand{\R}{\ensuremath{\mathbb R}}
\newcommand{\C}{\ensuremath{\mathbb C}}
\DeclareMathOperator{\Irr}{Irr}
\DeclareMathOperator{\GL}{GL}
\DeclareMathOperator{\SL}{SL}
\DeclareMathOperator{\res}{res}
\newcommand{\seslr}[7]{#1 \longrightarrow #2 \stackrel{#3}{\longrightarrow} #4 \stackrel{#5}{\longrightarrow} #6 \longrightarrow #7}
\newcommand{\sesr}[6]{\seslr{#1}{#2}{}{#3}{#4}{#5}{#6}}
\newcommand{\ses}[5]{\seslr{#1}{#2}{}{#3}{}{#4}{#5}}
\title{Minimal non-solvable Bieberbach groups}
\author{R. Lutowski\thanks{Corresponding author: \texttt{rafal.lutowski@ug.edu.pl}} }
\author{A. Szczepański}
\affil{\small
Institute of Mathematics\\Faculty of Mathematics, Physics and Informatics\\University of Gdańsk\\Wita Stwosza 57\\80-308 Gdańsk\\Poland
}
\begin{document}
	
\maketitle

\let\oldfootnote=\thefootnote
\renewcommand{\thefootnote}{}
\footnote{2020 \emph{Mathematics Subject Classification}: Primary 20H15, Secondary 20F16, 20D10.}
\footnote{\emph{Key words and phrases}: Bieberbach group, non-solvable, torsion-free, virtually abelian.}
\let\thefootnote=\oldfootnote
\setcounter{footnote}{0}

\begin{abstract}
It has been shown by several authors that there exists a non-solvable Bieberbach group of dimension $15$. In this note we show that this is in fact a minimal dimension for such kind of groups.
\end{abstract}
\section{Introduction}

Let $\Gamma$ be an $n$-dimensional crystallographic group, i.e. a discrete and cocompact subgroup of $\operatorname{Iso}(\R^n) = O(n) \ltimes \R^n$, the group of isometries of an $n$-dimensional euclidean space. By the first Bieberbach theorem (see \cite[Theorem 2.1]{Sz12}), the structure of $\Gamma$ is described by the following short exact sequence
\begin{equation}
\label{eq:ses}
\sesr{0}{L}{\Gamma}{\pi}{G}{1},
\end{equation}
where $G$ is a finite group (a holonomy group of $\Gamma$) and $L$ is a free abelian group of rank $n$ and it is the unique maximal abelian normal subgroup of $\Gamma$. The conjugation of elements of $L$ in $\Gamma$ gives $L$ the structure of a $G$-module and so the map $\varphi \colon G \to \GL(L)$, defined by the formula
\[
\varphi_g(l) = \gamma l \gamma^{-1},
\]
where $g \in G, l \in L$ and $\gamma \in \Gamma$ is such that $\pi(\gamma) = g$, is a homomorphism called \emph{integral holonomy representation of $\Gamma$}.

In the case when $\Gamma$ is in addition torsion-free, it is called a \emph{Bieberbach group}, the orbit space $X = \R^n/\Gamma$ is a Riemannian manifold with zero sectional curvature (a \emph{flat manifold}) and $\pi_1(X) = \Gamma$.

In this short note we deal with a structure of fundamental groups of flat manifolds -- with their solvability to be precise. It is known that there exists a non-solvable Bieberbach group in dimension $15$, see \cite[proof of Theorem 2.1]{HMSS87}. Recently J.A. Hillman asked, whether $15$ is in fact the precise lower bound for the dimension of such groups, see \cite{Hi23}. We show that in fact it is:

\begin{theorem}
\label{thm:main}
The minimal dimension of a non-solvable Bieberbach group is $15$.
\end{theorem}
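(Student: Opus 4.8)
The plan is to reduce solvability of $\Gamma$ to that of $G$ and then bound $n$ from below. As $L$ is an abelian normal subgroup of $\Gamma$ with quotient $G$, the group $\Gamma$ is solvable if and only if $G$ is; so a non-solvable Bieberbach group is precisely one with non-solvable holonomy group. The holonomy representation $\varphi$ of a crystallographic group is faithful, so $G$ embeds into $\GL(L)\cong\GL_n(\Z)$ and in particular has a faithful $n$-dimensional integral representation. Since the dimension-$15$ example is already supplied by \cite{HMSS87}, the entire content of the theorem is the lower bound: every Bieberbach group of dimension $n\le 14$ has solvable holonomy.

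First I would record the torsion-free criterion for the class $\alpha\in H^2(G;L)$ of the extension \eqref{eq:ses}: $\Gamma$ is torsion-free if and only if $\res^G_{\langle g\rangle}\alpha\neq 0$ for every $g\in G$ of prime order, since a torsion element projects to a prime-order element over which the restricted extension splits, and conversely. Through the periodicity isomorphism $H^2(\langle g\rangle;L)\cong\hat H^0(\langle g\rangle;L)=L^{\langle g\rangle}/NL$, with $N=1+g+\dots+g^{p-1}$, this yields two facts I would exploit: every prime-order $g$ must fix a nonzero sublattice, so $1$ is an eigenvalue of $\varphi(g)$; and because $H^2(\langle g\rangle;L)$ is killed by $p=|g|$, the restriction factors through the $p$-primary component $H^2(G;L)_{(p)}$. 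Hence the torsion-free conditions decouple over the primes dividing $|G|$: for each such $p$ one needs a class in $H^2(G;L)_{(p)}$ restricting nontrivially to an order-$p$ subgroup.

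Next I would reduce to a minimal non-solvable subgroup. Any non-solvable $G$ contains a subgroup $H$ that is minimal non-solvable (non-solvable with all proper subgroups solvable); restricting the extension to $\pi^{-1}(H)$ produces a Bieberbach group of the same dimension $n$ with holonomy $H$, so it suffices to bound $n$ for such $H$. The minimal non-solvable groups are $A_5$ together with the other minimal simple groups — $\PSL_2(7)$, $\PSL_2(8)$, $\mathrm{Sz}(8)$, $\PSL_3(3)$, and further $\PSL_2(q)$ — and the central extension $\SL_2(5)$. These are all larger (order at least $168$) with correspondingly larger faithful representations, and I would run the same machinery on them: the combination of the eigenvalue-$1$ condition at each prime with the cohomological covering argument below forces their torsion-free rank to be at least $15$, with the eigenvalue obstruction already eliminating the smallest representations in several cases (order-$7$ elements act fixed-point-freely on the $6$-dimensional representation of $\PSL_2(7)$, and the central involution of $\SL_2(5)$ acts as $-1$ on each faithful rational constituent, so fixed vectors force extra non-faithful summands).

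The core, and the main obstacle, is the $A_5$ case. Over $\Q$ the group $A_5$ has the trivial representation and three nontrivial irreducibles, of dimensions $4$, $5$ and $6$ (the last being the sum of the two Galois-conjugate $3$-dimensional ones); a faithful lattice must contain a faithful constituent, and the eigenvalue-$1$ condition already forbids using the $4$-dimensional standard representation for the prime $5$, since a $5$-cycle has no fixed vector there. I would then compute, for the $\Z[A_5]$-forms in play, the groups $\hat H^0(\langle g\rangle;L)=L^{\langle g\rangle}/NL$ for $g$ of order $2,3,5$ together with the restrictions from $H^2(A_5;L)_{(p)}$; for $p=3,5$ the Sylow subgroup is cyclic, so restriction to it is injective on $H^2(A_5;L)_{(p)}$ and the only question is whether this group is nonzero, whereas for $p=2$ the Sylow subgroup is the Klein four-group and one must control the map $H^2(V_4;L)\to H^2(C_2;L)$, which I expect to be the delicate point. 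The target statement is that, for each prime $p\in\{2,3,5\}$, a class can restrict nontrivially to an order-$p$ subgroup only when one specific nontrivial irreducible occurs as a rational constituent of $L$, and that no irreducible serves two primes; since the three irreducibles are distinct of total dimension $4+5+6=15$, any faithful $L$ of rank $\le 14$ must omit one of them, leaving some prime uncovered and producing torsion. Together with the known example this pins the minimal dimension at exactly $15$.
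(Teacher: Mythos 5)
Your overall strategy (reduce to the holonomy group, pass to a minimal non-solvable subgroup, and use the torsion-free criterion that the extension class must restrict nontrivially to every prime-order subgroup) matches the paper's skeleton, but there is a genuine gap at the step where you enumerate the minimal non-solvable groups. You claim they are $A_5$, the minimal simple groups of Thompson's list, ``and the central extension $\SL_2(5)$.'' This is false: minimal non-solvable groups need be neither simple nor quasi-simple. The paper finds $159$ of them of order up to $10^6$ (after a separate computation showing no MNS subgroup of $\GL_n(\Z)$, $n\le 10$, has larger order), and several of the extra ones have small faithful integral representations. The decisive example is $L_3(2)\,\mathrm{N}\,2^3$ of order $1344$ (a non-split extension $2^3\cdot L_3(2)$), which acts faithfully and irreducibly on $\Z^7$ and has two irreducible characters of degree $7$; a priori it admits faithful lattices of rank $13$ and $14$. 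Your eigenvalue-at-$1$ and covering heuristics do not dispose of it; the paper excludes it only by showing that neither degree-$7$ character lies in the principal $3$-block, so no class can restrict nontrivially to a subgroup of order $3$ (Lemma \ref{lm:p-adic} and Table \ref{table:1344:2}). Without the correct list of MNS groups your case analysis is not exhaustive.

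A second, smaller gap: the core lower bounds are asserted rather than proved. For $A_5$ and $L_2(7)$ the statement $m(G)=15$ is a substantive prior theorem (the paper cites \cite{HMSS87} and \cite{P89}); your sketch for $A_5$ is a plausible outline of that proof, but note that $H^2(G,L)$ and its restriction maps depend on the integral form of $L$, not only on its rational character, so ``no irreducible serves two primes'' must be verified for all lattices in each genus, which is exactly what those references do. For $L_2(8)$ your argument would have to rule out a rank-$14$ lattice of the form $M\oplus M$ with $M$ the $7$-dimensional module, where both primes could in principle be covered by the two copies; the paper kills this case by invoking the theorem of \cite{Lu21} that the rational holonomy representation of a non-abelian Bieberbach group has at least two non-isomorphic irreducible constituents, an input your eigenvalue/covering machinery does not supply.
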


Since the solvability of a Bieberbach group is fully described by the solvability of its holonomy group, in Section \ref{sec:finite} we will focus on finite non-solvable groups, with additional assumption that they are minimal, i.e. they do not have non-solvable proper subgroups. This allows us to reduce the number of groups to consider. The problem of finding a non-solvable Bieberbach group of minimal dimension can be in theory solved only by computer calculations. In practice, our attempts in larger dimensions failed because of taking too long time or not enough computer resources. We deal with this problem in Section \ref{sec:small} by determining minimal non-solvable groups of order up to one million. Having them at hand, the rest of calculations is quite efficient. Finally, in Section \ref{sec:bieb}, we show that there is no non-solvable Bieberbach group in dimension less than $15$.

\begin{remark}
In the paper we will use an abbreviation \emph{MNS}, which will mean \emph{minimal non-solvable}. It will be used in the context of finite and infinite groups as well.
\end{remark}

\section{Finite minimal non-solvable groups}
\label{sec:finite}

In the poset of non-solvable subgroups of a given \emph{finite} non-solvable group there is a minimal element. Hence the following definition is very natural.

\begin{definition}
A finite non-solvable group $G$ is called \emph{minimal non-solvable} if every proper subgroup of $G$ is solvable. 
\end{definition}

\begin{remark}
In \cite{Th68} Thompson defined a notion of a minimal simple group as a non-abelian finite simple group all of whose proper subgroups are solvable. As we will see, minimal non-solvable groups do not have to be simple, hence we consider strictly larger class of groups.
\end{remark}

\begin{lemma}
\label{lm:perfect}
Any finite MNS group is perfect.
\end{lemma}
\begin{proof}
If the commutator subgroup of $G$ is proper, then $G$ is solvable-by-abelian and hence -- solvable.
\end{proof}

Since all subgroups of a solvable group are solvable, we get:

\begin{lemma}
\label{lm:maximal}
A finite group is MNS if and only if it is non-solvable and its maximal subgroups are solvable.
\end{lemma}

\begin{lemma}
\label{lm:minimal-finite}
Let $G$ be a finite MNS group and let $\varphi \colon G \to H$ be a group epimorphism onto a non-trivial group $H$. Then $H$ is a MNS group.
\end{lemma}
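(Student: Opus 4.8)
The plan is to show that the image $H$ of an MNS group $G$ under an epimorphism is again MNS, i.e. that $H$ is non-solvable and every proper subgroup of $H$ is solvable. The non-solvability of $H$ is immediate: since $G$ is MNS it is in particular non-solvable, and a quotient of a non-solvable group is non-solvable (if $H$ were solvable, then $G$ would be solvable-by-solvable, hence solvable, using $\ker\varphi$ which is proper and therefore solvable by the MNS hypothesis). So the real content is the statement about proper subgroups.

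First I would take an arbitrary proper subgroup $K < H$ and consider its full preimage $\varphi^{-1}(K) \leq G$. The key observation is that $\varphi^{-1}(K)$ is a proper subgroup of $G$: indeed $\varphi(\varphi^{-1}(K)) = K \neq H$ because $\varphi$ is surjective, so $\varphi^{-1}(K) \neq G$. Since $G$ is MNS, the proper subgroup $\varphi^{-1}(K)$ is solvable. Now $K = \varphi(\varphi^{-1}(K))$ is a homomorphic image of the solvable group $\varphi^{-1}(K)$, and homomorphic images of solvable groups are solvable. Hence $K$ is solvable.

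Since $K < H$ was an arbitrary proper subgroup, every proper subgroup of $H$ is solvable, and combined with the non-solvability of $H$ this shows (by the definition, or equivalently by Lemma~\ref{lm:maximal}) that $H$ is MNS.

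The argument is essentially a routine correspondence-theorem manipulation, so I do not anticipate a genuine obstacle. The only point that requires a moment's care is verifying that $\varphi^{-1}(K)$ is \emph{proper} in $G$ — this is exactly where surjectivity of $\varphi$ onto $H$ and the properness of $K$ in $H$ are used, and it is what licenses the application of the MNS hypothesis to $G$. One could also phrase the proof by passing to $G/\ker\varphi \cong H$ and using the subgroup correspondence directly, but the preimage formulation above is the cleanest.
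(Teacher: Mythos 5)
Your proof is correct and follows essentially the same route as the paper's one-line argument: every proper subgroup of $H$ is the image of its (proper, hence solvable) preimage in $G$, so it is solvable. You simply spell out the preimage bookkeeping and the non-solvability of $H$ explicitly, which the paper leaves implicit.
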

\begin{proof}
Every proper subgroup of $H$ is an epimorphic image of a proper, and hence solvable, subgroup of $G$.
\end{proof}

\begin{lemma}
\label{lm:normal-finite}
Let $G$ be a finite MNS group. Then $G$ has a unique maximal normal subgroup.
\end{lemma}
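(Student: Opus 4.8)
The plan is to argue by contradiction, exploiting the tension between perfectness (Lemma~\ref{lm:perfect}) and the fact that every proper normal subgroup of $G$, being a proper subgroup, is solvable. First I would record that since $G$ is non-solvable it is in particular non-trivial, so the finite poset of proper normal subgroups is non-empty and therefore admits at least one maximal element; the entire content of the statement is thus the \emph{uniqueness} of such an element.

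Suppose then, towards a contradiction, that $M_1$ and $M_2$ are two distinct maximal normal subgroups of $G$. The key structural step is to observe that $M_1M_2$ is again a normal subgroup of $G$ which properly contains $M_1$ (because $M_2 \not\subseteq M_1$), whence maximality of $M_1$ forces $M_1M_2 = G$. Applying the second isomorphism theorem then yields
\[
G/M_1 = M_1M_2/M_1 \cong M_2/(M_1 \cap M_2).
\]

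Now I would combine two observations about this quotient. On one hand, $M_2$ is a proper subgroup of $G$ and hence solvable, so the quotient $M_2/(M_1\cap M_2) \cong G/M_1$ is solvable. On the other hand, $G/M_1$ is a quotient of the perfect group $G$ and is therefore itself perfect. A group that is simultaneously perfect and solvable must be trivial, so $G/M_1$ is trivial, i.e.\ $M_1 = G$. This contradicts the assumption that $M_1$ is a proper subgroup, and the contradiction establishes uniqueness.

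I do not expect any genuine obstacle here: the argument is short once one sees that perfectness is precisely the lever ruling out multiple maximal normal subgroups. The only point requiring a little care is the verification that $M_1M_2 = G$, which relies on both normality (to make $M_1M_2$ a normal subgroup) and maximality of $M_1$. Note that the symmetric roles of $M_1$ and $M_2$ are not both needed, since a single application of the isomorphism already produces the contradiction.
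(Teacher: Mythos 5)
Your proof is correct and follows essentially the same route as the paper: both arguments hinge on showing $M_1M_2 = G$ and applying the second isomorphism theorem $G/M_1 \cong M_2/(M_1\cap M_2)$. The only cosmetic difference is the direction of the final contradiction --- the paper notes $G/M_1$ is simple non-abelian and concludes the proper subgroup $M_2$ is non-solvable, while you push solvability of $M_2$ forward and use perfectness of $G$ to force $G/M_1$ to be trivial; these are equivalent readings of the same isomorphism.
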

\begin{proof}
Let $H$ and $K$ be maximal normal subgroups of $G$, such that $K \neq H$. By assumption, $G/H$ is a simple non-abelian group. Since $H \subsetneq HK \lhd G$, we have $G=HK$. By the second isomorphism theorem
\[
G/H = HK/H \cong K/(K \cap H)
\]
is simple and hence $K$ is non-solvable. Since every proper subgroup of $G$ is solvable, we get $K=G$, a contradiction.
\end{proof}

\section{Minimal non-solvable subgroups of $\GL_n(\Z)$}
\label{sec:small}

For the purposes of this article we need to determine conjugacy classes of irreducible MNS subgroups of $\GL_n(\Z)$, for $n \leq 10$. Maximal irreducible subgroups of $\GL_n(\Z)$ for $n\leq 3$ have been calculated already in the nineteenth century, for $n=4$ in 1965 by Dade \cite{D65}, for $5 \leq n \leq 9$ by Plesken in papers \cite{P77I,P77II,P80III,P80IV,P80V} and for $n=10$ by Souvignier in \cite{So94}. A library of those groups is available in GAP \cite{GAP4.12.1}. By Lemma \ref{lm:maximal} this should be enough for calculation of all MNS subgroups of $\GL_n(\Z)$, however higher dimensions may be quite involved in terms of computational resources and hence we take another approach. Nevertheless, our calculations show that

\begin{lemma}
Let $n\leq 10$ be a natural number. There is no MNS subgroup of $\GL_n(\Z)$ of order greater than $10^6$.
\end{lemma}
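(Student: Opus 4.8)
The plan is to combine the structural restrictions on finite MNS groups with Thompson's classification of minimal simple groups and Minkowski-type bounds on finite linear groups over $\Z$, reducing the statement to a finite check over a short list of candidates.

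First I would pin down the abstract shape of such a $G$. By Lemma \ref{lm:normal-finite} it has a unique maximal normal subgroup $N$, with $S:=G/N$ a non-abelian simple group; being a nontrivial quotient of an MNS group (Lemma \ref{lm:minimal-finite}), $S$ is itself MNS and simple, hence a \emph{minimal simple group} in the sense of Thompson. The crucial observation is that $N\le\Phi(G)$: if some maximal subgroup $M$ omitted $N$ then $MN=G$, so $M$ would surject onto $S$ and be non-solvable, contradicting Lemma \ref{lm:maximal}. Since $\Phi(G)$ is a nilpotent, hence solvable, normal subgroup, maximality of $N$ forces $N=\Phi(G)=F(G)$, so $N$ is nilpotent and $G$ is a perfect Frattini extension of a minimal simple group by a nilpotent group. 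As $G$ has no component (a component would be a proper non-solvable subgroup), the generalized Fitting theory gives the dichotomy: either $N=Z(G)$ is central (so $G$ is quasisimple and $|N|$ divides the Schur multiplier of $S$), or $C_G(N)=Z(N)\le N$, in which case $S$ acts on $N$ with the induced action on the chief factors of $N$ faithful.

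Second, I would bound the simple quotient. Decomposing $\Q^n$ and passing to an irreducible $\Q G$-constituent on which $N$ acts with proper (hence, by minimality, solvable) kernel, one obtains a faithful irreducible MNS subgroup of some $\GL_{n'}(\Z)$ with $n'\le n\le 10$ surjecting onto $S$. The degrees of the faithful rational representations of the minimal simple groups are known, and for dimension at most $10$ this eliminates all but a handful of small groups --- essentially $A_5$, $\PSL_2(7)$, $\PSL_2(8)$ and $\PSL_2(11)$ --- since every other minimal simple group, e.g.\ $\PSL_2(13)$, $\PSL_3(3)$ or $\operatorname{Sz}(8)$, already has minimal faithful rational degree exceeding $10$. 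Each surviving $S$ (and its small perfect central extensions) has order at most a few hundred.

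The hard part will be bounding the nilpotent radical $N$ inside ten dimensions, since the raw Minkowski bound is useless here: a Sylow $2$-subgroup alone may have order up to $2^{18}$, so $|N|\,|S|$ could a priori dwarf $10^6$. The point is that $N$ and $S$ compete for the same ten dimensions. When $S$ is large --- say $\PSL_2(11)$, whose relevant representation already fills $\Q^{10}$ --- Clifford theory forces $N$ to act by scalars on the irreducible constituents, so $N$ is central and tiny. When $S$ is small, so that there is room to spare (the worst case being $S=A_5$), one instead exploits $C_G(N)=Z(N)$: then $S$ embeds in the automorphism group of each chief factor of $N$, so those elementary abelian factors are faithful $\mathbb{F}_p[S]$-modules whose dimensions, together with the per-prime Minkowski bounds, are capped by realizability over $\Z$ in dimension at most $10$. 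I expect this coupled estimate, simultaneously controlling $|N|$ and $|S|$ through Clifford theory and the module structure, to be the main obstacle and the most delicate point of the argument.

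Finally I would confirm the numerical bound computationally. Every MNS subgroup of $\GL_n(\Z)$ for $n\le 10$ lies in a maximal finite subgroup, and these are built from the maximal irreducible finite subgroups available in the GAP library; searching them for perfect MNS subgroups and recording orders, all turn out to be at most $10^6$, in agreement with the structural analysis and cross-checked against the independent classification of MNS groups of order up to $10^6$ carried out in this section.
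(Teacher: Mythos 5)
Your structural reduction is attractive and partly goes beyond the paper --- in particular the observation that the unique maximal normal subgroup $N$ of a finite MNS group satisfies $N\le\Phi(G)$, hence $N=\Phi(G)=F(G)$ is nilpotent, is correct and does not appear in the paper --- but the argument has a genuine gap exactly where you flag it yourself: the bound on $|N|$ when the simple top $S$ is small. Abstract structure theory alone cannot supply this bound: the paper's own Table \ref{table:small_mns_hp} contains MNS groups such as $A_5\,\mathrm{N}\,5^3\,\mathrm{E}\,5^3$ of order $937500$ and $A_5\,\#\,2^6\,5^3\,[13]$ of order $480000$, so with $S=A_5$ the nilpotent radical can be enormous, and the entire content of the lemma is that such groups do not fit into $\GL_{10}(\Z)$. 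The ``coupled estimate'' via Clifford theory and per-prime Minkowski bounds is announced but never performed, and it is not obvious it closes in ten dimensions without case-by-case computation. Your reduction of $S$ is also not quite watertight: the irreducible constituent you extract is faithful on $G/\ker\rho_i$, an extension of $S$ by a nilpotent group, and a priori such an extension can have faithful irreducible degrees smaller than any faithful degree of $S$ itself (the quasisimple case, $\SL_2(q)$ versus $\PSL_2(q)$ over $\C$, is the standard warning), so eliminating $L_2(13)$, $L_3(3)$ and $\operatorname{Sz}(8)$ requires controlling their covers and nilpotent extensions as well, not just $S$. A smaller slip: $\PSL_2(11)$ contains $A_5$ and is therefore not a minimal simple group; it should not be on your candidate list.

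For comparison, the paper offers no structural proof of this lemma at all: it is asserted as the outcome of a computation over the classified maximal finite subgroups of $\GL_n(\Z)$ for $n\le 10$ available in GAP --- which is precisely the check you relegate to your final ``confirmation'' paragraph. As written, your argument therefore either rests entirely on that same computation (in which case the structural analysis is decorative) or is incomplete at its central quantitative step. If the middle estimate could be made rigorous, your route would be a genuine improvement, since it would replace an unverifiable computer search with a checkable argument; at present it is a plan, not a proof.
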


By Lemma \ref{lm:perfect} every finite MNS group is perfect. Hence we can use the before-mentioned criterion from Lemma \ref{lm:maximal} to the library of finite perfect groups from GAP to find all MNS groups of order up to $10^6$ (\emph{small MNS groups}). We get

\begin{proposition}
There are 159 MNS groups of order less than or equal to $10^6$. They are listed in Tables \ref{table:small_mns_hp} and \ref{table:small_mns}.
\end{proposition}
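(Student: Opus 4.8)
The plan is to turn the enumeration into a finite, computer-checkable search and then apply the maximal-subgroup criterion of Lemma \ref{lm:maximal} to each candidate. By Lemma \ref{lm:perfect} every finite MNS group is perfect, so it suffices to examine perfect groups of order at most $10^6$. GAP \cite{GAP4.12.1} provides a library that is meant to contain a complete and irredundant list of all such perfect groups, so the first step is simply to load this library and iterate over its members, which reduces an a priori unbounded problem to a finite one.

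The second step is the filtering. A nontrivial perfect group is automatically non-solvable, since a nontrivial solvable group has a proper commutator subgroup; hence the non-solvability hypothesis in Lemma \ref{lm:maximal} is free, and the only condition left to test is that every maximal subgroup be solvable. For each perfect group $G$ in the library I would compute a set of representatives of the conjugacy classes of maximal subgroups and test each representative for solvability; by Lemma \ref{lm:maximal}, $G$ is MNS precisely when all of them are solvable. Collecting the groups that pass this test and counting them up to isomorphism should yield the asserted list of $159$ groups recorded in Tables \ref{table:small_mns_hp} and \ref{table:small_mns}. As an independent consistency check one can confirm that each surviving $G$ has a unique maximal normal subgroup with simple quotient, as predicted by Lemmas \ref{lm:minimal-finite} and \ref{lm:normal-finite}.

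The main obstacle is foundational rather than conceptual, and has two parts. First, the correctness of the count depends entirely on the \emph{completeness} of the perfect-groups library in the range $|G| \le 10^6$: one must verify that every perfect group of each order up to $10^6$ is actually present, and in particular that none of the orders historically flagged as incompletely classified lies in, or perturbs the MNS members of, this range. Second, computing the maximal subgroups and deciding their solvability must remain feasible for all orders involved; this is where the practical cost concentrates for the larger groups, though it becomes manageable once attention is restricted to perfect groups. Once these two points are secured, the search is exhaustive and the proposition follows.
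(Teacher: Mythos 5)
Your proposal matches the paper's argument exactly: both reduce to perfect groups via Lemma \ref{lm:perfect} and then apply the maximal-subgroup criterion of Lemma \ref{lm:maximal} to GAP's library of finite perfect groups of order up to $10^6$. Your added caveat about the completeness of that library is a reasonable point of diligence but does not change the method.
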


Knowing small MNS groups, we will gather information about their rational representations. Let $G$ be a group and $\Irr_K(G)$ be the set of characters of irreducible representations of $G$ over the field $K$. By \cite[Corollary 10.2(b)]{Is76}
\[
\Irr_\Q(G) = \{ \chi_\Q : \chi \in \Irr(G) \}.
\]
In the above formula, for every $\chi \in \Irr(G)$, we have
\[
\chi_\Q = m_\Q(\chi) \sum \chi^\sigma,
\]
where $m_\Q(\chi)$ denotes the Schur index of $\chi$ and the sum is taken over $\sigma \in \operatorname{Gal}(\Q(\chi)/\Q)$.

\begin{remark}
The crucial part in the determination $\chi_\Q$ for $\chi \in \Irr(G) = \Irr_\C(G)$ is the Schur index $m_\Q(\chi)$. The calculations are available with the GAP package WEDDERGA \cite{Wedderga4.10.2}. In some cases a more efficient approach based on calculating lower and upper bound for $m_\Q(\chi)$ is taken. The lower bound depends on the Frobenius-Schur index $\nu_2(\chi)$ of $\chi$. It is set to $2$ if $\nu_2(\chi)=-1$ and to $1$ in other cases. The upper bound calculation is based on \cite[Lemma 10.4]{Is76}.
\end{remark}

Using the above remark we were able to compute $\Irr_\Q(G)$ for every small MNS group $G$. Looking at the faithful characters, we get

\begin{proposition}
	\label{prop:dim10}
	If $G$ is a finite MNS irreducible subgroup of $\GL_n(\Z)$ for $n \leq 10$, then $G$ is one of the following groups:
	\begin{enumerate}[label=\alph*)]
		\item $n=4$: $A_5$;
		\item $n=5$: $A_5$;
		\item $n=6$: $A_5, L_3(2)$;
		\item $n=7$: $L_3(2), L_2(8), L_3(2)N2^3$;
		\item $n=8$: $A_52^1 = \SL_2(5), L_3(2), L_3(2)2^1=\SL_2(7), L_2(8)$.
	\end{enumerate}
	In particular:
	\begin{enumerate}[resume,label=\alph*)]
	\item for $n \in \{1,2,3\}$ all finite subgroups of $\GL_n(\Z)$ are solvable;
	\item for $n \in \{9,10\}$ there is no minimal non-solvable irreducible subgroup of $\GL_n(\Z)$.
	\end{enumerate}
\end{proposition}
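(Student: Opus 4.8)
The plan is to recast ``irreducible subgroup of $\GL_n(\Z)$'' as a statement about faithful rational characters and then read the answer off the $\Irr_\Q$-data already assembled for the small MNS groups. A subgroup $G \leq \GL_n(\Z)$ acts on $\Z^n$, and its invariant pure sublattices correspond bijectively to the $\Q[G]$-submodules of $\Q^n$; hence $G$ is an irreducible subgroup of $\GL_n(\Z)$ exactly when $\Q^n$ is an irreducible $\Q[G]$-module, and it is automatically faithful since $G$ sits inside $\GL_n(\Z)$. Conversely, any faithful $\chi_\Q \in \Irr_\Q(G)$ of degree $n$ gives a faithful irreducible homomorphism $G \to \GL_n(\Q)$, and replacing $\Z^n$ by the $G$-invariant lattice $\sum_{g \in G} g\,\Z^n$ conjugates its image into $\GL_n(\Z)$ while preserving irreducibility. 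So the groups to be listed are precisely the MNS groups possessing a faithful $\chi_\Q \in \Irr_\Q(G)$ of degree $n \leq 10$, and both inclusions of the classification follow from locating such characters.

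First I would apply the preceding lemma, which bounds the order of an MNS subgroup of $\GL_n(\Z)$ by $10^6$ for $n \leq 10$; this guarantees that every candidate $G$ occurs among the $159$ small MNS groups, for which $\Irr_\Q$ has already been computed. For each such $G$ and each $\chi \in \Irr(G)$ the decomposition $\chi_\Q = m_\Q(\chi)\sum_\sigma \chi^\sigma$ yields $\deg \chi_\Q = m_\Q(\chi)\,[\Q(\chi):\Q]\,\chi(1)$, and since each Galois conjugate $\chi^\sigma$ shares the kernel of $\chi$, the character $\chi_\Q$ is faithful if and only if $\chi$ is. It therefore suffices, for each small MNS group, to run over its faithful irreducible complex characters, evaluate this degree from the field degrees and Schur indices on record, and keep those lying in $\{4, \dots, 10\}$; collating the results across all $159$ groups produces items a)--e).

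The boundary dimensions I would treat separately. For $n \in \{1,2,3\}$ the classical classification of finite subgroups of $\GL_n(\Z)$ shows them all to be solvable; equivalently, the smallest degree of a faithful rational character over all MNS groups equals $4$, realised by the rational degree-$4$ character of $A_5$ (its two degree-$3$ characters are conjugate over $\Q(\sqrt5)$ and fuse to degree $6$), so no MNS subgroup can act irreducibly below dimension $4$. For $n \in \{9,10\}$ the same sweep over the $159$ groups simply turns up no faithful $\chi_\Q$ of degree $9$ or $10$. These account for items f) and g).

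The hard part is not conceptual but computational, and it is concentrated in the Schur indices: whether $\deg \chi_\Q$ lands in the admissible range $n \leq 10$ depends on $m_\Q(\chi)$, whose exact value is the delicate quantity flagged in the preceding remark. For example, in item d) both $A_52^1 = \SL_2(5)$ and $L_3(2)2^1 = \SL_2(7)$ appear in dimension $8$ for opposite reasons: the faithful degree-$2$ character of $\SL_2(5)$ over $\Q(\sqrt5)$ has Schur index $2$, giving $2 \cdot 2 \cdot 2 = 8$, whereas the faithful degree-$4$ characters of $\SL_2(7)$ over $\Q(\sqrt{-7})$ have Schur index $1$ and fuse with their conjugates to give $1 \cdot 2 \cdot 4 = 8$. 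Once these indices are certified, the remainder of the argument is routine bookkeeping over the character tables.
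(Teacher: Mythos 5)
Your proposal is correct and follows essentially the same route as the paper: bound the order of candidate groups by the preceding lemma, reduce to the $159$ small MNS groups, compute $\Irr_\Q$ via $\chi_\Q = m_\Q(\chi)\sum_\sigma\chi^\sigma$ with the Schur indices as the delicate input, and read off the faithful rational irreducible characters of degree at most $10$. The paper states this computation without further elaboration, so your translation between irreducible MNS subgroups of $\GL_n(\Z)$ and faithful degree-$n$ characters in $\Irr_\Q(G)$ (via passing to an invariant lattice) and your worked examples for $\SL_2(5)$ and $\SL_2(7)$ only make explicit what the paper leaves implicit.
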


\section{Minimal non-solvable Bieberbach groups}
\label{sec:bieb}

Assume that an $n$-dimensional Bieberbach group $\Gamma$ is given by the short exact sequence \eqref{eq:ses}.

\begin{definition}
Let $\Gamma$ be a Bieberbach group as above.
We will call $\Gamma$ \emph{minimal non-solvable} (\emph{MNS}), if every subgroup $\Gamma'$ of $\Gamma$ such that
\begin{enumerate}[label=\alph*)]
\item $\Gamma'$ is of smaller dimension than $\Gamma$ or
\item $\Gamma' = \pi^{-1}(H)$ for some proper subgroup $H$ of $G$
\end{enumerate}
is solvable.
\end{definition}

Since we will often use specific cases of \cite[Theorem 2.1]{HMSS87}, \cite[Theorem V.1]{P89} and \cite[Theorem VI.1]{P89}, we put them here in the following form
\begin{corollary}
\label{cor:lower_bound}
Let $G$ be a finite group. Let $m(G)$ be the smallest dimension of a Bieberbach group with holonomy $G$. Then $m(A_5)=m(L_3(2))=15$ and $m(A_5 2^1) \geq 15$. 
\end{corollary}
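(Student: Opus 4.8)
The plan is to read all three statements off the cited theorems, keeping track of which inequality each one provides. Each equality $m(A_5)=15$ and $m(L_3(2))=15$ is the conjunction of an \emph{upper bound} -- existence of a $15$-dimensional Bieberbach group with the prescribed holonomy -- and a \emph{lower bound} -- nonexistence in any smaller dimension -- whereas for $A_5 2^1$ only the lower bound is needed. The upper bounds are the explicit constructions already in the literature: the proof of \cite[Theorem 2.1]{HMSS87} produces a $15$-dimensional flat manifold with holonomy $A_5$, and the realization for $L_3(2)$ is part of \cite[Theorem V.1]{P89}. So the real content lies in the lower bounds.

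To obtain them I would invoke the cohomological description of torsion-freeness. Given a faithful holonomy representation $\varphi\colon G\to\GL(L)$ as in \eqref{eq:ses}, an extension class $\alpha\in H^2(G;L)$ defines a torsion-free group precisely when the restriction $\res_C\alpha$ is non-zero for every subgroup $C\leq G$ of prime order; such an $\alpha$ is called \emph{special}. The existence of a special class is obstructed unless $L\otimes\Q$ contains, for each prime $p\mid|G|$ and each order-$p$ element $g$, enough non-trivial $\langle g\rangle$-constituents to make the restricted class in $H^2(\langle g\rangle;L)$ non-zero. Minimizing $\dim L$ under these prime-by-prime constraints is exactly the computation behind \cite[Theorem 2.1]{HMSS87} and \cite[Theorems V.1 and VI.1]{P89}. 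For $A_5$, whose rational irreducibles have degrees $1,4,5,6$, the minimum is attained by a lattice carrying the irreducibles of degrees $4$, $5$ and $6$ simultaneously, giving $4+5+6=15$ (the relevant primes being $2,3,5$). For $L_3(2)$ (primes $2,3,7$) the same analysis again yields $15$. Quoting these theorems gives $m(A_5)=m(L_3(2))=15$.

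For $A_5 2^1=\SL_2(5)$ only $m(A_5 2^1)\geq 15$ is required, and I would read this off \cite[Theorem VI.1]{P89}. The relevant feature is that $\SL_2(5)$ has a unique involution $z$; it is central and generates the kernel of the quotient $\SL_2(5)\twoheadrightarrow A_5$. Faithfulness forces $z$ to act non-trivially, so the prime-$2$ constraint on a special class is concentrated on this single subgroup $\langle z\rangle$, while the primes $3$ and $5$ are controlled through the $A_5$-quotient exactly as above. Assembling these two sources of constraint is what \cite[Theorem VI.1]{P89} accomplishes, delivering the bound $m(A_5 2^1)\geq 15$.

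I expect the main difficulty to be bookkeeping rather than a single hard step: one must reconcile the conventions and normalizations of the three sources and check that the quantity each of them bounds coincides with $m(\cdot)$ as defined here. The least mechanical case is $\SL_2(5)$, where the lower bound has to account simultaneously for the faithfulness of the central involution and for the $A_5$-level constraints, so the precise hypotheses of \cite[Theorem VI.1]{P89} must be verified with some care.
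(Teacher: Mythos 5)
Your proposal is correct and follows essentially the same route as the paper: the corollary is proved simply by reading the required bounds off the formulas of \cite[Theorem 2.1]{HMSS87} and \cite[Theorems V.1 and VI.1]{P89}, after identifying $A_5 \cong L_2(4) \cong L_2(5)$, $L_3(2) \cong L_2(7)$ and $A_5 2^1 \cong \SL_2(5)$ so that those theorems apply. The additional background you supply on special cohomology classes and prime-by-prime restrictions is a fair account of why those cited results hold, but it is not needed beyond the citation itself.
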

\begin{proof}
It is enough to use formulas in the before-mentioned theorems, noting that $A_5 \cong L_2(4) \cong L_2(5)$ and $L_3(2) \cong L_2(7)$.
\end{proof}

Theorem \ref{thm:main} is a direct result of the above corollary and the following proposition.
\begin{proposition}
\label{prop:min-non-solvable}
If $\Gamma$ is a minimal non-solvable Bieberbach group, then $n \geq 15$.
\end{proposition}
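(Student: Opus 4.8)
The plan is to argue by contradiction: suppose $\Gamma$ is an MNS Bieberbach group of dimension $n \leq 14$ and derive a contradiction. First I would extract the group-theoretic content of minimality. Condition (b) of the definition, together with the fact recalled in the introduction that a Bieberbach group is solvable precisely when its holonomy group is solvable, says that $\pi^{-1}(H)$ is solvable — equivalently that $H$ is solvable — for every proper subgroup $H \leq G$; since $\Gamma$ is non-solvable, so is $G$, and therefore $G$ is a finite MNS group. Moreover $\varphi$ is faithful, because $L = C_\Gamma(L)$ forces $\ker\varphi = \pi(C_\Gamma(L)) = 1$, so $G$ embeds as a subgroup of $\GL_n(\Z)$ via $\varphi$. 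Thus the whole problem reduces to understanding faithful integral representations of finite MNS groups in dimension at most $14$ and to showing that none of them can underlie a torsion-free extension.

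Next I would use condition (a) to control the reducibility of $\varphi$. Decompose $L \otimes \Q = \bigoplus_i V_i$ into $G$-irreducible rational subspaces; by Maschke's theorem $G \leq O(n)$ respects an orthogonal refinement of this splitting, so each $\gamma \in \Gamma$ is block structured and there is a homomorphism $p_i \colon \Gamma \to \operatorname{Iso}(V_i)$ onto the crystallographic group obtained by projecting to the $i$-th block. The kernel of $p_i$ is an honest subgroup of $\Gamma$, hence torsion-free, of dimension $n - \dim V_i < n$, and its holonomy group is $\ker \varphi_i$, where $\varphi_i$ is the $i$-th constituent of $\varphi$. Condition (a) forces this subgroup to be solvable, so every $\ker\varphi_i$ is solvable. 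By Lemma~\ref{lm:minimal-finite} each non-trivial constituent therefore has image $G/\ker\varphi_i$ that is again MNS and acts faithfully and $\Q$-irreducibly, i.e.\ is a faithful irreducible MNS subgroup of $\GL_{\dim V_i}(\Z)$. By Lemma~\ref{lm:normal-finite} all of these quotients share the same simple quotient $S = G/M$, where $M$ is the unique maximal normal subgroup of $G$, which is solvable since it is a proper subgroup of the MNS group $G$.

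Then I would match this against the classification. For every constituent of degree at most $10$, Proposition~\ref{prop:dim10} forces $G/\ker\varphi_i$ onto the explicit short list, and in particular $S \in \{A_5, L_3(2), L_2(8)\}$. The final link is the bound $n \geq m(G)$, which holds because $\Gamma$ is itself a Bieberbach group with holonomy $G$; combined with Corollary~\ref{cor:lower_bound} this already yields $n \geq 15$ whenever $G$ is $A_5$, $L_3(2)$ or $\SL_2(5)$. What remains — and this is where I expect the real work to be — is to exclude the surviving possibilities below dimension $15$: the other groups on the list of Proposition~\ref{prop:dim10} ($L_2(8)$, $\SL_2(7)$, $L_3(2)N2^3$), constituents of degree $11 \leq \dim V_i \leq 14$ that Proposition~\ref{prop:dim10} does not see, and genuinely reducible configurations in which several constituents have solvable kernels intersecting trivially (by faithfulness $\bigcap_i \ker\varphi_i = 1$). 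I would handle these by invoking the same dimension formulas behind Corollary~\ref{cor:lower_bound} to show $m(G) \geq 15$ for each such $G$, and by using the torsion-free requirement to rule out the reducible cases. The main obstacle is thus not the reduction to finite MNS groups, which is formal, but the dimension bookkeeping in the range $11 \leq n \leq 14$ and for the simple sections other than $A_5$ and $L_3(2)$, where Proposition~\ref{prop:dim10} gives no information and one must instead argue directly that torsion-freeness is incompatible with dimension below $15$.
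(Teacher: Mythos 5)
Your reduction to a finite MNS holonomy group acting faithfully, with every irreducible rational constituent landing in the list of Proposition \ref{prop:dim10}, is exactly the paper's setup (Corollary \ref{cor:minimal-holonomy}, Lemmas \ref{lm:center} and \ref{lm:holonomy_rep}), and the appeal to Corollary \ref{cor:lower_bound} for $A_5$, $L_3(2)$ and $\SL_2(5)$ is also the paper's. But the part you defer as ``the real work'' is the actual content of the proof, and the tools you propose for it would not suffice. Two concrete gaps. First, you have no means of excluding an \emph{irreducible} holonomy representation of degree $11$--$14$, i.e.\ of forcing every constituent into the range $\leq 10$ where Proposition \ref{prop:dim10} applies. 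The paper does this with \cite[Theorem 1]{Lu21} (see also \cite{HS90}): the rational holonomy representation of a Bieberbach group with non-abelian holonomy has at least two non-isomorphic irreducible constituents. The same input is what kills the $L_2(8)$ configuration with constituent dimensions $7+7$, since $L_2(8)$ admits only one relevant rational irreducible of degree $7$; your sketch never invokes it.

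Second, for $G=L_3(2)N2^3$ (which survives your screening, with constituent dimensions $\{6,7\}$ or $\{7,7\}$) there is no dimension formula of the kind behind Corollary \ref{cor:lower_bound} in the cited literature, so ``the same dimension formulas'' cannot be what finishes the argument. The paper instead uses a block-theoretic obstruction: torsion-freeness forces the defining cohomology class to restrict non-trivially to every subgroup of order $3$, which by Lemma \ref{lm:p-adic} requires some complex constituent of the holonomy module to lie in the principal $3$-block of $G$; Table \ref{table:1344:2} shows that none of the available degree-$7$ characters does. Your phrase ``use the torsion-free requirement'' points in the right direction but names no mechanism, and this is precisely where the proof stops being formal. (A smaller point: the paper first establishes $n>12$ in Corollary \ref{cor:dim12}, so only $n=13,14$ need the fine case analysis; as written, your sketch would have to cover $8\leq n\leq 14$.)
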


Directly from the definitions of finite and Bieberbach MNS groups we have

\begin{corollary}
\label{cor:minimal-holonomy}
Holonomy group of a MNS Bieberbach group is MNS.
\end{corollary}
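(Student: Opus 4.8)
The plan is to verify, directly from the two definitions, that the holonomy group $G$ of a minimal non-solvable Bieberbach group $\Gamma$ satisfies the defining conditions of a finite MNS group: namely that $G$ is non-solvable and that every proper subgroup of $G$ is solvable.

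First I would record the structural fact that underlies everything. The short exact sequence \eqref{eq:ses} exhibits $\Gamma$ as an extension of the abelian (hence solvable) normal subgroup $L$ by the finite group $G$, so that $\Gamma$ is solvable if and only if $G$ is solvable — this is precisely the observation from the introduction that the solvability of a Bieberbach group is governed by that of its holonomy. Since a MNS Bieberbach group is in particular non-solvable, this equivalence immediately yields that $G$ is non-solvable, establishing the first half of the finite MNS condition.

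For the second half, I would take an arbitrary proper subgroup $H < G$ and pass to its full preimage $\Gamma' = \pi^{-1}(H) \leq \Gamma$. By construction $\Gamma'$ is a subgroup of type (b) in the definition of a MNS Bieberbach group, so the hypothesis forces $\Gamma'$ to be solvable. Because $L = \pi^{-1}(1) \subseteq \Gamma'$ and the restriction of $\pi$ to $\Gamma'$ is a surjection onto $H$ with kernel $L$, we obtain $H \cong \Gamma'/L$, a quotient of the solvable group $\Gamma'$, hence solvable. As $H$ was an arbitrary proper subgroup, every proper subgroup of $G$ is solvable; combined with non-solvability, this shows that $G$ is MNS.

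The argument is a direct unwinding of the definitions, so I do not anticipate a genuine obstacle. The only point demanding a little care is the bookkeeping that identifies $H$ with the quotient $\Gamma'/L$ and thereby transfers solvability from the preimage $\Gamma'$ down to $H$, together with the standard observation that, over the abelian subgroup $L$, solvability of $\Gamma$ and of $G$ coincide.
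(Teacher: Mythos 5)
Your proof is correct and is precisely the argument the paper has in mind: the paper states this corollary follows ``directly from the definitions,'' and your unwinding --- $G$ non-solvable because $\Gamma/L \cong G$ with $L$ abelian, and every proper $H < G$ solvable because $H \cong \pi^{-1}(H)/L$ with $\pi^{-1}(H)$ solvable by condition (b) --- is exactly that intended direct verification. No gaps; you have simply written out the details the authors suppressed.
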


\begin{lemma}
\label{lm:center}
Holonomy representation of a MNS Bieberbach group $\Gamma$ does not have any trivial constituent.
\end{lemma}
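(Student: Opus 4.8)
The plan is to show that a trivial constituent in the holonomy representation would force a direct splitting off of a positive-dimensional piece, contradicting either torsion-freeness or the non-solvability/minimality hypotheses. Recall that the holonomy representation of $\Gamma$ is precisely the rational (indeed integral) holonomy representation $\varphi \colon G \to \GL(L)$ from \eqref{eq:ses}, viewed over $\Q$ after tensoring. A trivial constituent means that, as a $\Q G$-module, $L \otimes_\Z \Q$ contains a copy of the trivial module $\Q$ on which $G$ acts as the identity.

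First I would translate "trivial constituent" into a statement about fixed points. If the trivial character $1_G$ appears in $L \otimes \Q$, then the space of $G$-fixed vectors $(L\otimes\Q)^G$ is non-zero, equivalently the coinvariants are non-trivial. The key structural fact to exploit is that a non-trivial fixed subspace yields a nontrivial abelian quotient or a central/translational direction that the whole group acts trivially on. Concretely, I would consider the sublattice $M = L \cap (L\otimes\Q)^G$ together with the induced short exact sequence $\sesr{0}{M}{\Gamma/?}{}{G}{1}$; because $G$ acts trivially on $L\otimes\Q/(\text{complement})$, one obtains a Bieberbach quotient $\bar\Gamma$ of strictly smaller dimension whose holonomy is still $G$, acting through a representation with the trivial summand removed. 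By Corollary~\ref{cor:minimal-holonomy} the holonomy $G$ is MNS and in particular (by Lemma~\ref{lm:perfect}) perfect, so $G = [G,G]$ acts trivially on the rank-$k$ fixed part; the perfectness is what makes a trivial constituent genuinely problematic.

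The heart of the argument is to produce from the trivial summand a proper subgroup of $\Gamma$ of the forbidden type in the definition of MNS Bieberbach group, and then derive a contradiction with non-solvability. I would argue as follows: the invariant sublattice $M$ of rank $k \geq 1$ gives a central extension, so the preimage considerations let me split off a lower-dimensional Bieberbach group $\Gamma'$ carrying all of the non-solvable holonomy $G$. This $\Gamma'$ is a Bieberbach group of dimension $n-k < n$ with the same non-solvable holonomy, so it is a non-solvable subgroup of $\Gamma$ of strictly smaller dimension. But minimality (case a) in the definition) demands that every such lower-dimensional subgroup be solvable, giving the contradiction.

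The main obstacle I anticipate is the bookkeeping needed to guarantee that removing the trivial constituent actually produces a genuine torsion-free group of smaller dimension with holonomy still all of $G$, rather than merely a crystallographic quotient that might acquire torsion or lose non-solvability. In particular one must check that the complement to $(L\otimes\Q)^G$ can be chosen $G$-invariant over $\Q$ (immediate by Maschke, since $G$ is finite and $\Q$ has characteristic zero) and then descend this to a finite-index integral sublattice so that the resulting quotient is still torsion-free; the subtlety is that projecting a Bieberbach group onto the non-trivial part need not preserve torsion-freeness automatically, so I would instead pass to the \emph{sub}group fixing the trivial directions and verify directly that it injects with the full holonomy image. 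Handling this torsion-freeness check carefully, via the injectivity of the holonomy representation on torsion and the fact that $G$ acts trivially on $M$, is where the real care is required.
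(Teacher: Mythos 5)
Your strategy is at bottom the same as the paper's: a trivial constituent gives a non-zero $G$-fixed subspace of $L\otimes_\Z\Q$, hence an epimorphism from $\Gamma$ onto a free abelian group whose kernel $\Gamma'$ is a Bieberbach subgroup of strictly smaller dimension (the paper gets this in one step from \cite[Proposition 1.4]{HS86}, which yields $1\to\Gamma'\to\Gamma\to\Z\to 0$). Where you diverge is the endgame, and you choose the harder of the two available contradictions. The paper simply notes that, by clause a) of minimality, $\Gamma'$ \emph{is} solvable, and since $\Gamma/\Gamma'\cong\Z$ is abelian, $\Gamma$ itself would be solvable --- contradiction; no information about the holonomy of $\Gamma'$ is needed. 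You instead want $\Gamma'$ to be \emph{non}-solvable because it ``carries all of the non-solvable holonomy $G$,'' and that is precisely the step you assert without proof: the justification you offer (that $G=[G,G]$ acts trivially on the fixed part) is vacuous, since $G$ acts trivially on its fixed subspace by definition, and it does not show that $\Gamma'$ surjects onto $G$. The correct way to deploy perfectness is this: the holonomy of $\Gamma'$ is $\Gamma'L/L\leq G$, and $G/(\Gamma'L/L)\cong\Gamma/\Gamma'L$ is simultaneously a quotient of $G$ and a quotient of $\Gamma/\Gamma'\cong\Z^k$, hence abelian; since $G$ is perfect by Lemma \ref{lm:perfect} and Corollary \ref{cor:minimal-holonomy}, this quotient is trivial, so $\Gamma=\Gamma'L$ and $\Gamma'$ has full holonomy $G$. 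With that inserted your argument closes, and your instinct to pass to the kernel (a subgroup, hence automatically torsion-free) rather than to a quotient is the right one; but be aware that the paper's version of the final step is both shorter and avoids the holonomy computation entirely.
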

\begin{proof}
If the holonomy representation of $\Gamma$ has a trivial constituent, then by \cite[Proposition 1.4]{HS86} we have a short exact sequence
\[
\ses{1}{\Gamma'}{\Gamma}{\Z}{0}.
\]
By construction $\Gamma'$ is of dimension less than the dimension of $\Gamma$ and by assumption -- solvable. Hence $\Gamma$ is solvable, a contradiction.
\end{proof}

\begin{lemma}
\label{lm:holonomy_rep}
Let $\Gamma$, given by \eqref{eq:ses}, be a MNS Bieberbach group. Let 
\begin{equation}
\label{eq:decomposition}
\Q \otimes_\Z L = L_1 \oplus \ldots, \oplus L_k
\end{equation}
be a decomposition of $\Q G$-module into irreducible components. Let $\rho_i\colon G \to \GL(L_i)$ be a representation associated with the module $L_i$. Then:
\begin{enumerate}[label=\alph*)]
	\item $\dim_\Q(L_i) \geq 4$ for every $1 \leq i \leq k$,
	\item if $\rho_i(G)$ is a simple group for some $i$, then $\bigoplus_{j \neq i}L_j$ is a faithful $\Q G$-module.
\end{enumerate}
\end{lemma}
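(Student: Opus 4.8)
The plan is to prove both parts by exploiting the MNS hypothesis together with the structural lemmas already established, especially Lemma~\ref{lm:center} (no trivial constituent) and Corollary~\ref{cor:minimal-holonomy} (the holonomy group $G$ is itself a finite MNS group, hence perfect by Lemma~\ref{lm:perfect}).

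\medskip
\noindent\emph{Part a).}
First I would argue that each irreducible rational constituent $L_i$ must in fact be a \emph{faithful} $\Q\langle\rho_i(G)\rangle$-module in a strong sense, or rather focus on its kernel. Let $N_i = \ker \rho_i \lhd G$. I would like to show $\dim_\Q L_i \geq 4$. The key point is that $L_i$ is a nontrivial irreducible rational representation of the quotient $G/N_i$. If $G/N_i$ is trivial, then $L_i$ is trivial, contradicting Lemma~\ref{lm:center}; so $G/N_i$ is nontrivial, and by Lemma~\ref{lm:minimal-finite} it is again a (finite) MNS group, in particular non-solvable. Now the crucial input is Proposition~\ref{prop:dim10}: a non-solvable group has \emph{no} faithful irreducible rational representation in dimension $1,2,$ or $3$ — indeed parts e) of that proposition records that every finite subgroup of $\GL_n(\Z)$ is solvable for $n \in \{1,2,3\}$. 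Since $\rho_i$ factors through $G/N_i$ as a faithful irreducible rational representation of dimension $\dim_\Q L_i$, and $G/N_i$ is non-solvable, we cannot have $\dim_\Q L_i \leq 3$; combining this with the fact that $\dim_\Q L_i \neq 0$ (it is irreducible) and that no non-solvable group embeds faithfully and irreducibly in $\GL_3(\Z)$ forces $\dim_\Q L_i \geq 4$. The main obstacle here is making precise the passage from ``$\rho_i(G)$ non-solvable'' to ``$\rho_i(G) \subseteq \GL_{\dim L_i}(\Z)$'': one must note that an irreducible $\Q G$-module carries a $G$-invariant lattice, so $\rho_i(G)$ is conjugate into $\GL_{\dim L_i}(\Z)$, and it acts irreducibly there, which is exactly the setting of Proposition~\ref{prop:dim10}.

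\medskip
\noindent\emph{Part b).}
Suppose $\rho_i(G)$ is simple for some fixed $i$, and set $M = \bigoplus_{j \neq i} L_j$ with associated representation $\rho = \bigoplus_{j \neq i}\rho_i$; I must show $\ker \rho = 1$. Let $K = \ker \rho \lhd G$. Since $L = \bigoplus_j L_j$ is the full (rationalized) translation lattice and the holonomy representation $\varphi$ of a Bieberbach group is \emph{faithful} (the holonomy group acts faithfully on $L$, as $L$ is the unique maximal abelian normal subgroup), we have $\bigcap_{j} \ker \rho_j = 1$, whence $K \cap \ker \rho_i = 1$. Now $K$ and $\ker \rho_i$ are both normal in $G$ and meet trivially, so they centralize each other and $K$ embeds into $G/\ker\rho_i \cong \rho_i(G)$, which is simple. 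Therefore the image of $K$ in $\rho_i(G)$ is either trivial or all of $\rho_i(G)$: if trivial then $K \subseteq \ker\rho_i$, forcing $K = K \cap \ker\rho_i = 1$ as desired; the remaining case is $K \cong \rho_i(G)$, i.e.\ $G = K \times \ker\rho_i$ as an internal direct product of two normal subgroups (since they commute and $G = K\,\ker\rho_i$ by simplicity of the quotient). This last case is the main obstacle, and I would rule it out using Lemma~\ref{lm:normal-finite}: a finite MNS group has a \emph{unique} maximal normal subgroup, so it cannot be a direct product $K \times \ker\rho_i$ with both factors nontrivial, because such a product has at least two distinct maximal normal subgroups (the two factors, assuming each is nontrivial and the complementary quotient is nontrivial). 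Hence the case $K \cong \rho_i(G)$ is impossible, leaving $K = 1$ and proving that $\bigoplus_{j\neq i} L_j$ is faithful.

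\medskip
The delicate bookkeeping is in Part~b): one must verify that $\ker\rho_i$ is nontrivial before invoking uniqueness of the maximal normal subgroup, for otherwise $G \cong \rho_i(G)$ is simple and $M$ could a priori fail to be faithful. If $\ker\rho_i = 1$ then $G$ is simple, and then any normal $K$ with $K \cap \ker\rho_i = K = 1$ gives the result immediately; so the two subcases together cover everything. I expect Part~a) to be essentially a direct citation of Proposition~\ref{prop:dim10} once the lattice/faithfulness reduction is spelled out, while Part~b) is where the structural lemmas on MNS groups (Lemmas~\ref{lm:minimal-finite} and~\ref{lm:normal-finite}) do the real work.
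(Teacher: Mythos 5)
Your part a) is essentially the paper's own argument, stated there in contrapositive form: nontriviality of each $\rho_i$ comes from Lemma \ref{lm:center}, and Proposition \ref{prop:dim10}~f) together with Lemma \ref{lm:minimal-finite} and Corollary \ref{cor:minimal-holonomy} excludes $\dim_\Q L_i \leq 3$; your remark that $\rho_i(G)$ preserves a full lattice in $L_i$ (e.g.\ $L \cap L_i$) is the right justification for invoking the $\GL_n(\Z)$ statement. For part b) you take a genuinely different route. The paper argues in one stroke: since $\rho_i(G)$ is simple, $\ker\rho_i$ is a maximal normal subgroup, hence by Lemma \ref{lm:normal-finite} it is \emph{the} maximal normal subgroup; as each $\ker\rho_j$ is a proper normal subgroup (Lemma \ref{lm:center} again), every $\ker\rho_j$ lies inside $\ker\rho_i$, so $\bigcap_{j\neq i}\ker\rho_j = \bigcap_j \ker\rho_j = 1$ by faithfulness of the holonomy representation. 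You instead set $K=\bigcap_{j\neq i}\ker\rho_j$, use $K\cap\ker\rho_i=1$ to embed $K$ as a normal subgroup of the simple quotient $G/\ker\rho_i$, and exclude the alternative $G=K\times\ker\rho_i$ via uniqueness of the maximal normal subgroup. This is correct and rests on the same two lemmas, just through a longer chain; the paper's nesting argument is shorter and covers all cases uniformly.

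Two points in your write-up need repair. First, the subcase $\ker\rho_i=1$ does not work as written: you assert ``$K\cap\ker\rho_i=K=1$'', but when $\ker\rho_i=1$ the identity $K\cap\ker\rho_i=1$ is vacuous and gives no information about $K$. The subcase is still easy: $G$ is then simple, so $K\in\{1,G\}$, and $K=G$ would force every $\rho_j$ with $j\neq i$ to be trivial, contradicting Lemma \ref{lm:center}. Second, a minor imprecision: in a direct product of two nontrivial groups the two factors need not themselves be maximal normal subgroups; the clean way to finish is that each of the two nontrivial proper normal factors lies in the unique maximal normal subgroup $N$, whence $G=K\ker\rho_i\subseteq N\subsetneq G$, a contradiction.
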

\begin{proof}
Let $i \in \{1,\ldots,k\}$.  By Lemma \ref{lm:center} $\rho_i$ is non-trivial. Using Proposition \ref{prop:dim10} we get that if $\dim(L_i) < 4$ then $\rho_i(G)$ is solvable and hence -- by Lemma \ref{lm:minimal-finite} -- $G$ is not minimal non-solvable. Using Corollary \ref{cor:minimal-holonomy} we get that $\Gamma$ is not MNS. 

By Lemma \ref{lm:normal-finite}, the kernel of $\rho_i$ is the maximal normal subgroup of $G$ and hence $\ker \rho_j \subset \ker \rho_i$ for $1 \leq j \leq k$. Since the holonomy representation is faithful, we get
\[
\bigcap_{j \neq i} \ker \rho_j \subset \bigcap \ker \rho_j = 1.
\]
\end{proof}

\begin{corollary}
\label{cor:dim12}
Let $\Gamma$ be a minimal non-solvable Bieberbach group of dimension $n$. Then $n > 12$.
\end{corollary}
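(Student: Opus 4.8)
The plan is to combine the structural constraints on the holonomy representation from Lemma~\ref{lm:holonomy_rep} with the classification in Proposition~\ref{prop:dim10} and the dimension bounds of Corollary~\ref{cor:lower_bound}. Assume for contradiction that $\Gamma$ is an MNS Bieberbach group with $n \le 12$, write $\Q\otimes_\Z L = L_1\oplus\cdots\oplus L_k$ as in \eqref{eq:decomposition} with $d_i = \dim_\Q L_i$, and let $G$ be its holonomy, which is finite MNS by Corollary~\ref{cor:minimal-holonomy}. By Lemma~\ref{lm:holonomy_rep}(a) each $d_i\ge 4$, so $n\le 12$ forces $k\le 3$ and leaves only the partitions $(d_1,\dots,d_k)$ equal to $(4,4,4)$ when $k=3$, with both parts in $\{4,\dots,8\}$ when $k=2$, and with $d_1=n\le 12$ when $k=1$.

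Next I would identify the possible constituent images. By Lemma~\ref{lm:center} each $\rho_i$ is nontrivial, so $\rho_i(G)$ is a nontrivial quotient of $G$, hence MNS by Lemma~\ref{lm:minimal-finite}, and after choosing a $G$-invariant lattice in $L_i$ it is an irreducible subgroup of $\GL_{d_i}(\Z)$. Whenever $d_i\le 10$ -- in particular for every constituent once $k\ge 2$, since then $d_i\le 8$ -- Proposition~\ref{prop:dim10} pins $\rho_i(G)$ down to the explicit list $A_5, L_3(2), L_2(8), L_3(2)N2^3, \SL_2(5), \SL_2(7)$. Because $G$ has a unique maximal normal subgroup $M$ (Lemma~\ref{lm:normal-finite}) and each $\ker\rho_i$ is a proper normal subgroup, every $\ker\rho_i\subseteq M$, so all the $\rho_i(G)$ share the common simple quotient $S=G/M$. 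Matching this against the list gives $S\in\{A_5,L_3(2),L_2(8)\}$, and the dimensions become incompatible in several cases: a constituent with quotient $L_2(8)$ needs $d_i\ge 7$, so two of them already exceed $12$, ruling out $S=L_2(8)$ once $k\ge 2$; likewise a constituent of dimension $4$ or $5$ (forcing $S=A_5$) cannot coexist with one whose quotient is $L_3(2)$ or $L_2(8)$.

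Having cut the candidate holonomies down to a short finite list, I would invoke the torsion-free constraint. When $G$ is itself one of $A_5$, $L_3(2)$, $\SL_2(5)$, Corollary~\ref{cor:lower_bound} gives $n\ge m(G)\ge 15>12$ at once. For the remaining reduced candidates I would apply the same formulas from \cite{HMSS87,P89} that underlie Corollary~\ref{cor:lower_bound} to compute, or lower-bound, $m(G)$ and check $m(G)\ge 13$; and when $k\ge 2$ with some $\rho_i(G)$ simple, Lemma~\ref{lm:holonomy_rep}(b) lets me peel off that constituent and transfer the bound, since the complement $\bigoplus_{j\ne i}L_j$ stays faithful.

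The main obstacle I anticipate is twofold. First, the irreducible case $k=1$ with $n\in\{11,12\}$ lies beyond the range $n\le 10$ of Proposition~\ref{prop:dim10}; here one genuinely meets candidate holonomies such as the minimal simple group $L_2(11)$, which does carry a faithful rational irreducible character of degree $11$, and excluding these requires the explicit rational character data of the small MNS groups together with a torsion-freeness obstruction, rather than the stated classification. Second, one must convert the information ``$G$ surjects onto $S$'' into a genuine lower bound on $m(G)$ for \emph{every} surviving candidate, not merely for $S$; verifying $m(G)\ge 13$ uniformly across the non-simple extensions -- for instance $L_2(8)$, $\SL_2(7)$ and the non-split $L_3(2)N2^3$ -- is where the bulk of the careful case analysis will concentrate.
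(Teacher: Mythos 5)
Your treatment of the cases $k=2$ and $k=3$ is essentially the paper's argument: constituent dimensions at least $4$ by Lemma~\ref{lm:holonomy_rep}, identification of the possible images via Proposition~\ref{prop:dim10}, compatibility of the simple quotients via the unique maximal normal subgroup (Lemma~\ref{lm:normal-finite}), and then Corollary~\ref{cor:lower_bound} to push the dimension to $15$. But the case $k=1$ is a genuine gap, and you correctly sense it: with only the tools you list, an irreducible holonomy representation of degree $11$ or $12$ is not excluded, since Proposition~\ref{prop:dim10} stops at $n=10$, and even for $n\le 10$ Corollary~\ref{cor:lower_bound} gives no bound for candidates such as $L_2(8)$. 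Flagging this as ``an obstacle requiring explicit character data and a torsion-freeness obstruction'' is not a proof; as written the argument does not close.

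The missing idea is the reducibility theorem the paper cites at the outset of its proof: by \cite[Theorem 1]{Lu21} (see also \cite{HS90}), the rational holonomy representation of a Bieberbach group with non-abelian holonomy has at least \emph{two non-isomorphic} irreducible constituents. This kills $k=1$ outright (and, combined with Lemma~\ref{lm:holonomy_rep}, immediately gives $n\ge 8$), so no analysis of degree-$11$ or degree-$12$ faithful rational irreducibles is ever needed. Incidentally, your specific worry about $L_2(11)$ is unfounded for a different reason: $L_2(11)$ contains $A_5$, hence is not minimal non-solvable and is excluded by Corollary~\ref{cor:minimal-holonomy}; but other small MNS groups (e.g.\ $L_2(13)$) would still have to be examined without the reducibility theorem, so the gap is real. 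One further small point: your plan to ``peel off'' a simple constituent via Lemma~\ref{lm:holonomy_rep}(b) and transfer bounds is not needed here and is not what the paper does; the dimension pairs together with Corollary~\ref{cor:lower_bound} already suffice once reducibility is in hand.
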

\begin{proof}
By \cite[Theorem 1]{Lu21}, the holonomy representation of a non-abelian Bieberbach group contains, over the rationals, at least two non-isomorphic constituents. In particular, it is reducible (see \cite{HS90}). Hence, by Lemma \ref{lm:holonomy_rep}, $n \geq 8$. Let $\Gamma$ be defined by \eqref{eq:ses} with rational holonomy representation decomposition given by \eqref{eq:decomposition}. Using Proposition \ref{prop:dim10} and Lemma \ref{lm:normal-finite} we get that if $8 \leq n \leq 12$ and $k=2$, then the possibilities for $\{\dim L_1, \dim L_2\}$ and $G$ are as follows:
\begin{enumerate}[label=\alph*)]
\item $\{4,4\},\{4,5\},\{4,6\},\{5,5\},\{5,6\},\{6,6\}$ and $G=A_5$ or $G=L_3(2)$,
\item $\{4,8\}$ and $G=A_52^1$.
\end{enumerate}
By Corollary \ref{cor:lower_bound} we get that $n \geq 15$, a contradiction.

If $k=3$ then $\dim L_i = 4$ and $\ker\rho_i$ is the maximal subgroup of $G$, for $i=1,2,3$. Hence all the kernels are equal and -- because the holonomy representation $\rho_1 \oplus \rho_2 \oplus \rho_3$ is faithful -- they are trivial. In that case $G=A_5$ and -- as above -- $n \geq 15$.   
\end{proof}

From \cite[Lemmas 2.1 and 2.2(a)]{HS90} one gets the following lemma. For the sake of completeness, we give the proof here, following the before-mentioned lemmas.
\begin{lemma}
\label{lm:p-adic}
Let $L$ be a $G$-lattice, $U$ be a subgroup of $G$ of prime order $p$ and $\alpha \in H^2(G,L)$. If $\res^G_U(\alpha) \neq 0$ then at least one constituent of $\C \otimes_\Z L$ lies in the principal $p$-block of $G$.
\end{lemma}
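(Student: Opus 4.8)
The plan is to reduce the statement to a block-theoretic fact about the restriction of cohomology to a cyclic subgroup of order $p$, and then to invoke the Brauer correspondence. First I would observe that the target $H^2(U,L)$ is a $p$-group, being the cohomology of a group of order $p$; since $U$ is cyclic, periodicity even lets one replace it by the Tate group $\widehat{H}^0(U,L)=L^U/N_U L$, where $N_U=\sum_{u\in U}u$, although this simplification is not essential. Consequently only $p$-primary information matters, and I would tensor everything with the $p$-adic integers $\Z_p$, replacing $L$ by $L_p=\Z_p\otimes_\Z L$. This changes neither the hypothesis $\res^G_U\alpha\neq 0$ (the relevant $\ell$-parts for $\ell\neq p$ die under restriction to $U$) nor the set of complex constituents, since $\C\otimes_{\Z_p}L_p=\C\otimes_\Z L$.

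Next I would use the block decomposition. Let $1=\sum_B e_B$ be the decomposition of the identity of $Z(\Z_p G)$ into primitive central (block) idempotents. This splits $L_p=\bigoplus_B e_B L_p$ as a sum of $\Z_p G$-modules, and because the $e_B$ are central and functorial they split $H^2(G,L_p)$ and the restriction map as well. From $\res^G_U\alpha\neq 0$ I obtain a block $B$ with $\res^G_U(e_B\alpha)\neq 0$ in $H^2(U,e_B L_p)$; in particular $e_B L_p\neq 0$, so $B$ contributes at least one character to $\C\otimes_\Z L$. The whole statement thus reduces to showing that only the principal block $B_0$ can carry a class restricting nontrivially to $U$. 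One special case already shows the mechanism: if $B$ has defect $0$ then $e_B L_p$ is a projective $\Z_p G$-module, hence cohomologically trivial, so such a block never contributes — this already disposes of the non-principal blocks occurring for the simple groups relevant to this paper, whose non-principal blocks at the primes in question have defect $0$.

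The hard part — and the genuine content inherited from \cite[Lemmas 2.1 and 2.2(a)]{HS90} — is to rule out non-principal blocks of positive defect as well, i.e.\ to show that $\res^G_U$ annihilates $H^2(G,e_B L_p)$ whenever $B\neq B_0$. This is where Brauer's block theory enters: restriction to the $p$-subgroup $U$ is compatible with the Brauer homomorphism $\mathrm{Br}_U\colon Z(\Z_p G)\to Z(\Z_p C_G(U))$, and the Tate cohomology $\widehat{H}^*(U,-)$, being computed over the $p$-group $U$, only detects the principal block of $C_G(U)$. By Brauer's third main theorem the principal block of $G$ is the unique block of $G$ whose Brauer correspondent over $C_G(U)$ is principal, so a class pulled back from any $e_B$ with $B\neq B_0$ must restrict to zero. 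Feeding this back, the block $B$ produced above is forced to be $B_0$, whence some constituent of $\C\otimes_\Z L$ lies in the principal $p$-block. I expect the delicate point to be making precise the compatibility between the module-theoretic restriction map on $\widehat{H}^*$ and the Brauer correspondence; I would establish it exactly as in \cite{HS90} rather than reprove the underlying block theory.
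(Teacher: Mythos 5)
Your opening move coincides with the paper's: pass to $L_p=\Z_p\otimes_\Z L$, observe that $\res^G_U\alpha$ survives $p$-adic completion (the paper cites \cite[Remark II.1(ii)]{P89} for this), and conclude $H^2(G,L_p)\neq 0$. From there, however, your treatment of the central block-theoretic step is both heavier than necessary and, as written, not a proof. The fact you actually need is that for \emph{every} non-principal block $B$ of $\Z_pG$ (regardless of defect) one has $H^n(G,e_BL_p)=0$ for all $n$ --- not merely that such classes die under restriction to $U$. This is a one-line consequence of $H^n(G,-)=\mathrm{Ext}^n_{\Z_pG}(\Z_p,-)$: the central idempotent $e_B$ acts on this Ext group as the identity through the coefficient module $e_BL_p$ and as zero through the trivial module $\Z_p$, which lies in the principal block; hence the group vanishes. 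So $H^2(G,L_p)\neq 0$ already forces $e_{B_0}L_p\neq 0$, and the subgroup $U$ plays no role beyond guaranteeing nonvanishing after completion. This is exactly what the paper extracts from \cite[Lemma 2.2.25]{HP89}. The genuine gap in your proposal sits in the paragraph you yourself flag as the hard part: the asserted compatibility between $\res^G_U$ on cohomology, the Brauer homomorphism $Z(\Z_pG)\to Z(\Z_pC_G(U))$, and Brauer's third main theorem is precisely the statement that would require proof (the block decomposition of a restricted module is not controlled by the Brauer correspondence in the naive way your sketch suggests), and you defer it to the literature rather than establishing it. Since the lemma closes without any of that machinery, replace that paragraph by the Ext argument above.

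A second, smaller omission: your final sentence silently identifies ``a summand of $L_p$ lies in the principal $\Z_pG$-block'' with ``a constituent of $\C\otimes_\Z L$ lies in the principal $p$-block of $G$.'' The latter is a statement about complex characters and only makes sense after fixing a comparison between $\Irr(G)$ and the irreducible characters over a $p$-adic splitting field. The paper devotes the second half of its proof to exactly this: it takes the splitting field $\Q(\zeta)$ with $\zeta=\exp(2\pi i/|G|)$, an embedding $\varphi\colon\Q(\zeta)\to\overline{\Q_p}$, sets $K=\varphi(\Q(\zeta))\Q_p$ with ring of integers $R$, and uses that $\varphi$ induces a block-preserving bijection $\Irr(G)\to\Irr_K(G)$ before passing to a direct summand of $R\otimes_{\Z_p}$ of the principal-block lattice. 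You should supply (or at least cite) this descent step rather than writing $\C\otimes_{\Z_p}L_p=\C\otimes_\Z L$ and moving on.
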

\begin{proof}
Let $\Z_p$ denote the ring of $p$-adic integers and $i \colon L \to \Z_p \otimes_\Z L$ be the inclusion. By \cite[Remark II.1(ii)]{P89}, the restriction $\res^G_U i_*(\alpha) \neq 0$ and hence $H^2(G,\Z_p \otimes_\Z L) \neq 0$. By \cite[Lemma 2.2.25]{HP89} some direct summand $U$ of $\Z_p \otimes_\Z L$ lies in the principal $\Z_pG$-block.

By the Brauer theorem $\Q(\zeta)$ is a splitting field for $G$, where $\zeta = \exp{2\pi i/|G|}$ (see \cite[(10.3)]{Is76}). Let $\varphi \colon \Q(\zeta) \to \overline{\Q_p}$ be an embedding over $\Q$, where $\overline{\Q_p}$ is the algebraic closure of the field $\Q_p$ of $p$-adic numbers. Let $K=\varphi(\Q(\zeta))\Q_p$ and $R$ be its ring of integers. Then $\varphi$ induces a bijection between $\Irr(G)$ and $\Irr_K(G)$, which preserves $p$-blocks (see \cite[(7.10)]{Go80}). We finish by noting that there is a direct summand of $R \otimes_{\Z_p} U$ which lies in the principal $p$-block (see \cite[Section VI.1]{F82}).
\end{proof}

We are ready to prove Proposition \ref{prop:min-non-solvable} and hence -- the main theorem.

\begin{proof}[Proof of Proposition \ref{prop:min-non-solvable}]
Let $\Gamma$ be as in \eqref{eq:ses}, with the rational holonomy module decomposition into $k$ irreducible summands as in \eqref{eq:decomposition}. Denote by $N$ the maximal normal subgroup of the holonomy group $G$. Note that we are left with the cases $n=13$ or $n=14$.

Let $k=2$ and $\dim L_1 \leq \dim L_2$. If $\dim L_1 \leq 5$ then by Proposition \ref{prop:dim10} we get $\dim L_1=5, \dim L_2=8$ and $G=A_52^1$, but then, by Corollary \ref{cor:lower_bound}, the dimension of $\Gamma$ is at least $15$. If $\dim L_1 = 6$, then $G/N \cong L_3(2)$ and either $N=1$, which is excluded -- again by Corollary \ref{cor:lower_bound} -- or $N=C_2^3$ and $G=L_3(2)N2^3$, see Proposition \ref{prop:dim10}. In order to get a faithful $G$-lattice of rank $13$ or $14$ one has to use one of two irreducible characters of degree $7$, but in that case none of the possible characters of $G$ lies in the principal $3$-block, see Table \ref{table:1344:2}. If $\dim L_1=7$ then two of the three possibilities for $G$ are the same as before -- namely $L_3(2),L_3(2)N2^3$ -- and excluded for the same reasons. In the third case $G=L_2(8)$ and we get only one rational representation of dimension $7$, hence it is excluded by \cite[Theorem 1]{Lu21}.

If $k=3$, then the possibilities for $(\dim L_1, \dim L_2, \dim L_3)$ -- assuming non-decreasing order -- are as follows: $(4,4,5), (4,4,6), (4,5,5)$. By Lemma \ref{lm:normal-finite} and Proposition \ref{prop:dim10} $G=A_5$, for which $n \geq 15$ by Corollary \ref{cor:lower_bound}.
\end{proof}

\section{Two non-solvable Bieberbach groups}

In this section we give explicit constructions of Bieberbach groups with holonomy groups $A_5$ and $L_3(2)$. Let $\Gamma$ be one of those groups, with holonomy $G$. The integral holonomy representation of $\Gamma$ is a direct sum of irreducible \emph{left} $G$-lattices $L_1,\ldots,L_k$, where $L_i = \Z^{n_i}$ for $i=1,\ldots,k$. We give matrices of actions of generators as well as the images of representatives $\delta_1,\ldots,\delta_k$ of cohomology classes under the isomorphisms $H^2(G,\Z^{n_i}) \cong H^1(G,\Q^{n_i}/\Z^{n_i})$. In addition, for each pair $(L_i,\delta_i)$ we give prime numbers $p$ such that restriction of cohomology class of $\delta_i$ to a subgroup of $G$ of order $p$ is non-zero. All the data show that in fact we define a torsion-free crystallographic group.

\begin{remark}
All representations, except one, have been taken from \cite{AtlasRep2.1.6}. The construction of the exception is explicitly presented.
\end{remark}

\subsection{Bieberbach group with holonomy $A_5$}

The presentation of $A_5$ is as follows:
\[
A_5 = \langle a,b \;|\; a^2=b^3=(ab)^5=1 \rangle
\]
\begin{enumerate}
	\item Lattice $L_1$, $p=3$.
	\begin{align*}
		a \mapsto
		\begin{pmatrix*}[r]
			  1 &   0 &   0 &   0\\
			0 &   0 &   1 &   0\\
			0 &   1 &   0 &   0\\
			-1 &  -1 &  -1 &  -1\\
		\end{pmatrix*}
		\quad
		&
		\quad
		b \mapsto
		\begin{pmatrix*}[r]
			 0 &  1 &  0 &  0\\
			0 &  0 &  0 &  1\\
			0 &  0 &  1 &  0\\
			1 &  0 &  0 &  0\\
		\end{pmatrix*}
	\end{align*}
Cohomology class:
\begin{align*}
	a \mapsto
	\begin{pmatrix}
		0 & \frac{1}{3} & \frac{2}{3} & \frac{2}{3}\\
	\end{pmatrix}^T
	\quad
	&
	\quad
	b \mapsto
	\begin{pmatrix}
		\frac{2}{3} & \frac{1}{3} & \frac{1}{3} & 0
	\end{pmatrix}^T
\end{align*}

	\item Lattice $L_2$, $p=2$.
\begin{align*}
	a \mapsto
	\begin{pmatrix*}[r]
  1 &   0 &   0 &   0 &   0\\
0 &   0 &   1 &   0 &   0\\
0 &   1 &   0 &   0 &   0\\
0 &   0 &   0 &   1 &   0\\
-1 &  -1 &  -1 &  -1 &  -1\\
	\end{pmatrix*}
	\quad
	&
	\quad
	b \mapsto
	\begin{pmatrix*}[r]
  0 &   1 &   0 &   0 &   0\\
0 &   0 &   0 &   1 &   0\\
0 &   0 &   0 &   0 &   1\\
1 &   0 &   0 &   0 &   0\\
-1 &  -1 &  -1 &  -1 &  -1\\
	\end{pmatrix*}
\end{align*}
Cohomology class:
\begin{align*}
	a \mapsto
	\begin{pmatrix}
		\frac{1}{2} & 0 & 0 & \frac{1}{2} & 0\\
	\end{pmatrix}^T
	\quad
	&
	\quad
	b \mapsto
	\begin{pmatrix}
	0 & \frac{1}{2} & 0 & \frac{1}{2} & 0\\
	\end{pmatrix}^T
\end{align*}

	\item Lattice $L_3$, $p=5$.

\begin{align*}
	a \mapsto
	\begin{pmatrix*}[r]
 -1 &  0 &  0 &   0 &  0 &  0\\
0 &  0 &  1 &   0 &  0 &  0\\
0 &  1 &  0 &   0 &  0 &  0\\
0 &  0 &  0 &  -1 &  0 &  0\\
0 &  0 &  0 &   0 &  0 &  1\\
0 &  0 &  0 &   0 &  1 &  0\\
	\end{pmatrix*}
	\quad
	&
	\quad
	b \mapsto
	\begin{pmatrix*}[r]
 0 &  1 &  0 &  0 &  0 &  0\\
0 &  0 &  0 &  1 &  0 &  0\\
0 &  0 &  0 &  0 &  1 &  0\\
1 &  0 &  0 &  0 &  0 &  0\\
0 &  0 &  0 &  0 &  0 &  1\\
0 &  0 &  1 &  0 &  0 &  0\\
	\end{pmatrix*}
\end{align*}
Cohomology class:
\begin{align*}
	a \mapsto
	\begin{pmatrix}
		\frac{1}{5} &  \frac{2}{5} &  \frac{3}{5} &  \frac{2}{5} &  \frac{3}{5} &  \frac{2}{5}\\
	\end{pmatrix}^T
	\quad
	&
	\quad
	b \mapsto
	\begin{pmatrix}
		\frac{3}{5} &  \frac{3}{5} &  \frac{2}{5} &  \frac{4}{5} &  \frac{1}{5} &  \frac{2}{5}\\
	\end{pmatrix}^T
\end{align*}
\end{enumerate}

\subsection{Bieberbach group with holonomy $L_3(2)$}

The presentation of $L_3(2)$ is as follows:
\[
L_3(2) = \langle a,b \;|\; a^2=b^3=(ab)^7=[a,b]^4=1 \rangle
\]

\begin{enumerate}
	\item Lattice $L_1$, $p=2,3$. This is a sublattice of the $7$-dimensional one from \cite{AtlasRep2.1.6} with basis
	\[
	\begin{pmatrix*}[r]
1 &  0 &  0 &  0 &  0 &  0 &  1\\
0 &  1 &  0 &  0 &  0 &  0 &  1\\
0 &  0 &  1 &  0 &  0 &  0 &  1\\
0 &  0 &  0 &  1 &  0 &  0 &  1\\
0 &  0 &  0 &  0 &  1 &  0 &  1\\
0 &  0 &  0 &  0 &  0 &  1 &  1\\
0 &  0 &  0 &  0 &  0 &  2 &  0\\
	\end{pmatrix*}.
	\]
	\begin{align*}
		a \mapsto
		\begin{pmatrix*}[r]
-1 &   0 &  -1 &  -1 &  -1 &   3 &  -2\\
 0 &  -1 &  -1 &  -1 &  -1 &   3 &  -2\\
-1 &  -1 &  -1 &   0 &  -1 &   3 &  -2\\
-1 &  -1 &   0 &  -1 &  -1 &   3 &  -2\\
-1 &  -1 &  -1 &  -1 &  -1 &   4 &  -2\\
-1 &  -1 &  -1 &  -1 &   0 &   3 &  -2\\
 0 &   0 &   0 &   0 &   2 &  -2 &   1\\
		\end{pmatrix*}
		\quad
		&
		\quad
		b \mapsto
		\begin{pmatrix*}[r]
0 &  0 &  1 &  1 &  0 &  -2 &  1\\
1 &  0 &  0 &  1 &  0 &  -2 &  1\\
0 &  1 &  0 &  1 &  0 &  -2 &  1\\
0 &  0 &  0 &  1 &  1 &  -2 &  1\\
0 &  0 &  0 &  1 &  0 &   0 &  0\\
0 &  0 &  0 &  1 &  0 &  -1 &  1\\
0 &  0 &  0 &  0 &  0 &   0 &  1\\
		\end{pmatrix*}
	\end{align*}
	Cohomology class:
	\begin{align*}
		a \mapsto
		\begin{pmatrix}
 \frac{1}{6} &  \frac{2}{3} &  \frac{1}{6} &  \frac{2}{3} &  \frac{2}{3} &  \frac{1}{6} & 0\\
		\end{pmatrix}^T
		\quad
		&
		\quad
		b \mapsto
		\begin{pmatrix}
\frac{2}{3} &  \frac{1}{3} &  \frac{1}{2} &  \frac{2}{3} &  \frac{1}{6} &  \frac{1}{6} & \frac{2}{3}\\
		\end{pmatrix}^T
	\end{align*}

	\item Lattice $L_2$, $p=7$.
\begin{align*}
	a \mapsto
	\begin{pmatrix*}[r]
  0 &  1 &   0 &  0 &  0 &  0 &  0 &  0\\
1 &  0 &   0 &  0 &  0 &  0 &  0 &  0\\
-1 &  0 &  -1 &  1 &  0 &  0 &  0 &  0\\
-1 &  1 &   0 &  1 &  0 &  0 &  0 &  0\\
0 &  0 &   0 &  0 &  0 &  1 &  0 &  0\\
0 &  0 &   0 &  0 &  1 &  0 &  0 &  0\\
0 &  0 &   0 &  0 &  0 &  0 &  0 &  1\\
0 &  0 &   0 &  0 &  0 &  0 &  1 &  0\\
	\end{pmatrix*}
	\quad
	&
	\quad
	b \mapsto
	\begin{pmatrix*}[r]
 0 &   0 &  1 &   0 &  0 &  0 &  0 &  0\\
0 &   0 &  0 &   1 &  0 &  0 &  0 &  0\\
0 &   0 &  0 &   0 &  1 &  0 &  0 &  0\\
1 &  -1 &  1 &  -1 &  1 &  0 &  0 &  0\\
1 &   0 &  0 &   0 &  0 &  0 &  0 &  0\\
0 &   0 &  0 &   0 &  0 &  0 &  1 &  0\\
0 &   1 &  0 &   0 &  0 &  0 &  0 &  1\\
0 &   0 &  0 &  -1 &  0 &  1 &  0 &  0\\
	\end{pmatrix*}
\end{align*}
Cohomology class:
\begin{align*}
	a \mapsto
	\begin{pmatrix}
\frac{4}{7} &  \frac{3}{7} &  \frac{2}{7} &  \frac{4}{7} &  \frac{4}{7} &  \frac{3}{7} &  0 &  0\\
	\end{pmatrix}^T
	\quad
	&
	\quad
	b \mapsto
	\begin{pmatrix}
 \frac{2}{7} &  \frac{6}{7} &  \frac{4}{7} &  0 &  \frac{1}{7} &  \frac{4}{7} &  0 &  \frac{4}{7}\\
	\end{pmatrix}^T
\end{align*}
\end{enumerate}

\section*{Acknowledgments}

The authors would like to thank J.A. Hillman for pointing our attention to the problem of finding non-solvable Bieberbach group of minimal dimension.

\printbibliography

\begin{table}[!h]
	\begin{center}
		\setlength{\columnsep}{-1em}
		\footnotesize
		\begin{multicols}{2}
			\begin{tabular}{rrl}
				Order & Id & Description \\ \hline
				60 & 1 & $A_5$\\
				120 & 1 & $A_5\,2^1$\\                 
				168 & 1 & $L_3(2)$\\     
				336 & 1 & $L_3(2)\,2^1 = SL_2(7)$\\
				504 & 1 & $L_2(8)$\\               
				1092 & 1 & $L_2(13)$\\                  
				1344 & 2 & $L_3(2)\,\mathrm{N}\,2^3$\\            
				1920 & 4 & $A_5\,2^1\,\mathrm{E}\,2^4$\\          
				2184 & 1 & $L_2(13)\,2^1 = SL_2(13)$\\
				2448 & 1 & $L_2(17)$\\                
				2688 & 3 & $L_3(2)\,2^1\times\mathrm{N}\,2^3$\\      
				3840 & 6 & $A_5\,2^1\,\mathrm{E}\,2^4\,\mathrm{E}\,2^1$\\
				4860 & 2 & $A_5\,\mathrm{N}\,3^{4'}$\\
				4896 & 1 & $L_2(17)\,2^1 = SL_2(17)$\\
				5616 & 1 & $L_3(3)$\\
				6072 & 1 & $L_2(23)$\\
				7500 & 2 & $A_5\,\mathrm{N}\,5^3$\\
				9720 & 2 & $A_5\,2^1\times\mathrm{N}\,3^{4'}$\\
				9828 & 1 & $L_2(27)$\\
				10752 & 4 & $L_3(2)\,\mathrm{N}\,2^3\,\mathrm{A}\,2^3$\\
				10752 & 7 & $L_3(2)\,\mathrm{N}\,2^3\times\mathrm{N}\,2^{3'}$\\
				10752 & 9 & $L_3(2)\,\mathrm{N}\,2^3\,\mathrm{E}\,2^{3'}$\\
				12144 & 1 & $L_2(23)\,2^1 = SL_2(23)$\\
				15000 & 2 & $A_5\,2^1\times\mathrm{N}\,5^3$\\
				19656 & 1 & $L_2(27)\,2^1 = SL_2(27)$\\
				21504 & 8 & $L_3(2)\,2^7$\\
				21504 & 16 & $L_3(2)\,2^7$\\
				21504 & 22 & $L_3(2)\,2^1\,\times(\mathrm{N}\,2^3\,\mathrm{E}\,2^{3'}) $\\
				25308 & 1 & $L_2(37)$\\
				29120 & 1 & Sz(8)\\
				30720 & 11 & $A_5\,2^1\,\mathrm{E}\,2^4\,\mathrm{A}\,2^4$\\
				30720 & 22 & $A_5\,2^1\,\mathrm{E}\,2^4$ C $2^{4'}$\\
				32256 & 2 & $L_2(8)\,\mathrm{N}\,2^6$\\
				32736 & 1 & $L_2(32)$\\
				39732 & 1 & $L_2(43)$\\
				43008 & 19 & $L_3(2)\,2^1\,(\mathrm{N}\,2^3 \times \mathrm{N}\,2^{3'})\,\mathrm{E}\,2^1$\\
				50616 & 1 & $L_2(37)\,2^1 = SL_2(37)$\\
				51888 & 1 & $L_2(47)$\\
				57624 & 2 & $L_3(2)\,\mathrm{N}\,7^3$\\
				58240 & 1 & Sz(8)$\,2^1$\\
			\end{tabular}
			
			\begin{tabular}{rrl}
				Order & Id & Description \\ \hline
				64512 & 2 & $L_2(8)\,\mathrm{N}\,2^6\,\mathrm{E}\,2^1$ I\\
				64512 & 3 & $L_2(8)\,\mathrm{N}\,2^6\,\mathrm{E}\,2^1$ II\\
				64512 & 4 & $L_2(8)\,\mathrm{N}\,2^6\,\mathrm{E}\,2^1$ III\\
				74412 & 1 & $L_2(53)$\\
				79464 & 1 & $L_2(43)\,2^1 = SL_2(43)$\\
				103776 & 1 & $L_2(47)\,2^1 = SL_2(47)$\\
				115248 & 2 & $L_3(2)\,2^1\times\mathrm{N}\,7^3$\\
				116480 & 1 & Sz(8) $2^1\times2^1$\\
				129024 & 2 & $L_2(8)\mathrm{N}(2^6\,\mathrm{E}\,2^1$ A ) C $2^1$\\ 
				129024 & 3 & $L_2(8)\,\mathrm{N}\,2^6\mathrm{N}(2^1\times2^1)\,$I\\ 
				129024 & 4 & $L_2(8)\,\mathrm{N}\,2^6\mathrm{N}(2^1\times2^1)\,$II\\
				129024 & 5 & $L_2(8)\,\mathrm{N}\,2^6\mathrm{N}(2^1\times2^1)\,$III\\
				148824 & 1 & $L_2(53)\,2^1 = SL_2(53)$\\
				150348 & 1 & $L_2(67)$\\
				155520 & 12 & $A_5\,\mathrm{\#}\,2^5\,3^4\,$[11]\\
				194472 & 1 & $L_2(73)$\\
				240000 & 11 & $A_5\,\mathrm{\#}\,2^5\,5^3\,$[11]\\
				258048 & 2 & $L_2(8)\,\mathrm{N}\,(\,2^6\,\mathrm{N}\,(2^1\times2^1\,\mathrm{A}\,)\,)\mathrm{C}\,2^1$\\
				258048 & 3 & $L_2(8)\,\mathrm{N}\,2^6\,\mathrm{N}\,(2^1\times2^1\times2^1) $\\
				285852 & 1 & $L_2(83)$\\
				300696 & 1 & $L_2(67)\,2^1 = SL_2(67)$\\
				311040 & 14 & $A_5\,\mathrm{\#}\,2^6\,3^4\,$[13]\\
				367416 & 3 & $L_3(2)\,\mathrm{N}\,3^7$\\
				388944 & 1 & $L_2(73)\,2^1 = SL_2(73)$\\
				393660 & 4 & $A_5\,\mathrm{N}\,3^{4'}\,\mathrm{A}\,3^{4'}$\\
				456288 & 1 & $L_2(97)$\\
				460992 & 4 & $L_3(2)\,\mathrm{\#}\,2^3\,7^3\,$[4]\\
				480000 & 13 & $A_5\,\mathrm{\#}\,2^6\,5^3\,$[13]\\
				516096 & 1 & $L_2(8)\,\mathrm{N}\,(\,2^6\,\mathrm{N}\,(\,2^1\times 2^1\times 2^1\,\mathrm{A}\,)\,)\,\mathrm{C}\,2^1$\\
				546312 & 1 & $L_2(103)$\\
				571704 & 1 & $L_2(83)\,2^1 = SL_2(83)$\\
				607500 & 4 & $A_5\,\mathrm{\#}\,3^4\,5^3\,$[4]\\
				612468 & 1 & $L_2(107)$\\
				721392 & 1 & $L_2(113)$\\
				734832 & 3 & $L_3(2)\,2^1\times\mathrm{N}\,3^7$\\
				787320 & 4 & $A_5\,2^1\times\mathrm{N}\,3^{4'}\,\mathrm{A}\,3^{4'}$\\
				912576 & 1 & $L_2(97)\,2^1 = SL_2(97)$\\
				921984 & 6 & $L_3(2)\,\mathrm{\#}\,2^4\,7^3\,$[6]\\
				937500 & 7 & $A_5\,\mathrm{N}\,5^3\,\mathrm{E}\,5^3$\\
				937500 & 8 & $A_5\,\mathrm{N}\,5^3\,\mathrm{C}\,5^3$\\
			\end{tabular}
		\end{multicols}
	\end{center}
	\caption{MNS groups of order up to $10^6$ listed in \cite{HP89}. Order and id give identification of perfect group in GAP.}
	\label{table:small_mns_hp}
\end{table}

\newcommand\tablefont{\fontsize{7pt}{8pt}\selectfont}
\begin{table}[!h]
	\begin{center}
		\footnotesize
		\begin{multicols}{6}
			\begin{tabular}{lr}
				Order & Id\\ \hline
				61440 & 13\\ 61440 & 14\\ 61440 & 15\\ 61440 & 52\\ 61440 & 53\\ 61440 & 54\\ 
				61440 & 76\\ 86016 & 24\\ 86016 & 25\\ 86016 & 26\\ 86016 & 27\\ 86016 & 28\\
				86016 & 35\\ 86016 & 36\\
			\end{tabular}
			
			\begin{tabular}{lr}
				Order & Id\\ \hline
				 86016 & 40\\ 86016 & 41\\ 122880 & 88\\ 122880 & 89\\
				122880 & 90\\ 122880 & 218\\ 122880 & 219\\ 172032 & 1\\ 172032 & 91\\ 172032 & 92\\
				172032 & 93\\ 172032 & 94\\ 172032 & 95\\
			\end{tabular}
			
			\begin{tabular}{lr}
				Order & Id\\ \hline
				 172032 & 128\\ 172032 & 129\\ 172032 & 151\\
				172032 & 152\\ 245760 & 566\\ 344064 & 191\\ 344064 & 268\\ 344064 & 269\\
				344064 & 291\\ 491520 & 19\\ 491520 & 21\\ 688128 & 176\\ 688128 & 177\\
			\end{tabular}
			
			\begin{tabular}{lr}
				Order & Id\\ \hline
				 688128 & 178\\
				688128 & 179\\ 688128 & 180\\ 688128 & 181\\ 688128 & 182\\ 688128 & 183\\
				688128 & 184\\ 688128 & 185\\ 688128 & 186\\ 688128 & 187\\ 688128 & 188\\ 688128 & 189\\
				688128 & 190\\ 
			\end{tabular}
			
			\begin{tabular}{lr}
				Order & Id\\ \hline	
				
				688128 & 191\\ 688128 & 192\\ 688128 & 226\\ 688128 & 227\\
				688128 & 228\\ 688128 & 229\\ 688128 & 230\\ 688128 & 231\\ 688128 & 232\\ 688128 & 251\\
				688128 & 252\\ 688128 & 253\\ 688128 & 254\\ 
			\end{tabular}
			
			\begin{tabular}{lr}
				Order & Id\\ \hline
				
				983040 & 64\\ 983040 & 66\\
				983040 & 67\\ 983040 & 69\\ 983040 & 72\\ 983040 & 104\\ 983040 & 105\\ 983040 & 361\\
				983040 & 362\\ 983040 & 363\\ 983040 & 371\\ 983040 & 372\\ 983040 & 373\\
			\end{tabular}
		\end{multicols}
	\end{center}
	\caption{MNS groups of order up to $10^6$ not listed in \cite{HP89}. Order and id give identification of perfect group in GAP.}
	\label{table:small_mns}
\end{table}

\begin{table}[ht]
\begin{align*}
& \begin{array}{l|rrrrrrrrrrr|r|rrr}
 & 1a & 2a & 2b & 3a & 4a & 4b & 6a & 7a & 7b & 8a &  8b & m & B_2 & B_3 & B_7 \\ \hline
\chi_{1}  & 1 & 1 & 1 & 1 & 1 & 1 & 1 & 1 & 1 & 1 & 1 & 1 & 1 & 1 & 1\\
\chi_{2}  & 3 & 3 &-1 & 0 &-1 &-1 & 0 & \alpha &\overline{\alpha} & 1 & 1 & 1 & 1 & 2 & 1\\
\chi_{3}  & 3 & 3 &-1 & 0 &-1 &-1 & 0 &\overline{\alpha} & \alpha & 1 & 1 & 1 & 1 & 3 & 1\\
\chi_{4}  & 6 & 6 & 2 & 0 & 2 & 2 & 0 &-1 &-1 & 0 & 0 & 1 & 1 & 4 & 1\\
\chi_{5}  & 7 & 7 &-1 & 1 &-1 &-1 & 1 & 0 & 0 &-1 &-1 & 1 & 1 & 1 & 2\\
\chi_{6}  & 7 &-1 &-1 & 1 & 3 &-1 &-1 & 0 & 0 & 1 &-1 & 1 & 1 & 5 & 3\\
\chi_{7}  & 7 &-1 &-1 & 1 &-1 & 3 &-1 & 0 & 0 &-1 & 1 & 1 & 1 & 5 & 4\\
\chi_{8}  & 8 & 8 & 0 &-1 & 0 & 0 &-1 & 1 & 1 & 0 & 0 & 1 & 1 & 1 & 1\\
\chi_{9}  & 14& -2& -2& -1&  2&  2&  1&  0&  0&  0&  0& 1 & 1 & 5 & 5\\
\chi_{10} & 21& -3&  1&  0&  1& -3&  0&  0&  0& -1&  1& 1 & 1 & 6 & 6\\
\chi_{11} & 21& -3&  1&  0& -3&  1&  0&  0&  0&  1& -1& 1 & 1 & 7 & 7\\
\end{array}
\\
& \alpha = -(1+i\sqrt{7})/2
\end{align*}
\caption{Character table of $L_3(2) N 2^3$. Conjugacy classes are named by the orders of their elements, suffixed by a letter, $m$ denotes the Schur index and $p$-blocks in column $B_p$ are labeled by natural numbers.}
\label{table:1344:2}
\end{table}
\end{document}